\DeclareMathOperator{\dist}{dist}
\DeclareMathOperator{\SF}{{\mathcal S\mathcal F}}
\let\Re\xRe
\newtheorem{theorem}{Theorem}
\newtheorem*{theoremKor}{Korevaar's theorem}
\newtheorem{lemma}[theorem]{Lemma}
\newtheorem{corollary}[theorem]{Corollary}
\newtheorem{proposition}[theorem]{Proposition}
\newtheorem{remark}[theorem]{Remark}
\newtheorem*{conjectureChui}{Chui's Conjecture}
\theoremstyle{remark}
\newtheorem{question}{Question}
\begin{document}

\title[Chui's conjecture in 
Bergman spaces]{Chui's conjecture in Bergman spaces}

\author[E.~Abakumov, A.~Borichev, K.~Fedorovskiy]{Evgeny Abakumov, Alexander Borichev, Konstantin Fedorovskiy}

\begin{abstract}
We solve Chui's conjecture on the simplest fractions (i.e., sums of Cauchy kernels with unit coefficients) 
in weighted (Hilbert) Bergman spaces. Namely, for a wide class of weights, we prove that for every $N$, the simplest fractions with $N$ poles on the unit circle have minimal norm if and only if the poles are equispaced on the circle. We find sharp asymptotics of these norms. Furthermore, we describe the closure of the simplest fractions in weighted Bergman spaces,  using an $L^2$ version of Thompson's theorem on dominated approximation by simplest fractions.
\end{abstract}

\thanks{This work was carried out in the framework of the project 19-11-00058
by the Russian Science Foundation.}

\address{
\hskip -\parindent Evgeny Abakumov:
\newline \indent Universit\'e Gustave Eiffel, Marne-la-Vall\'ee, France
\newline \indent {\tt evgueni.abakoumov@univ-eiffel.fr}
\smallskip
\newline \noindent Alexander Borichev:
\newline \indent Aix--Marseille University, CNRS, Centrale Marseille, I2M, France
\newline \indent {\tt alexander.borichev@math.cnrs.fr}
\smallskip
\newline \noindent Konstantin Fedorovskiy:
\newline \indent Bauman Moscow State Technical University, Moscow, Russia,
\newline \indent Saint Petersburg State University, St.~Petersburg, Russia
\newline \indent {\tt kfedorovs@yandex.ru}
}

\maketitle

\section{Introduction}
The starting point of our research is the following question: How to put $N$ point charges on the unit circle $\mathbb T$ of the complex plane $\mathbb C$ in order to minimize the average strength of the corresponding electrostatic field in the unit disk $\mathbb D$, assuming forces inversely proportional to the distance? C.~K.~Chui \cite{Chu1971monthly} conjectured in 1971 that this average strength is minimal when the charges are equispaced on $\mathbb T$, and, surprisingly, this very natural and elementary conjecture is still open.

This and related questions call for the study of approximation properties of so-called simplest fractions.

We mean by {\it a simplest fraction}
(the term {\it simple partial fraction} is also used in the literature) a
rational function $r$ in the complex variable $z$ having the form
$$
r(z)=\sum_{0\le k<N}\frac1{z-a_k},
$$
where $N$ is a positive integer, and $a_k$, $0\le k<N$, are points in $\mathbb C$. Note that the simplest fraction $r(z)$ can be
represented as the logarithmic derivative of the polynomial
$\prod_{0\le k<N}(z-a_k)$; alternatively, the function $r(z)$ can be thought of as the Cauchy transform
of the sum of the Dirac measures of mass one at the
points $a_0,\ldots,a_{N-1}$. Another interpretation of the simplest
fraction $r$ is that the value $r(z)$ represents the complex conjugation of
the electrostatic field at the point $z$, caused by charges
placed at the points $a_0,\ldots,a_{N-1}$, assuming forces inversely proportional to the distance.

The simplest fractions are an interesting and important object
in various topics of contemporary analysis. One can mention here, for example, that in 2006 J.~M.~Anderson and V.~Eiderman \cite{AndEid2006annals} 
solved a long-standing problem of Macintyre--Fuchs 
describing the growth of the Hausdorff content of the level sets of simplest fractions.
For a recent survey of numerous results on simplest fractions 
see \cite{3auth}.

G.~R.~Mac Lane \cite{ML} initiated in 1949 the study of approximation by polynomials with restriction on the location of their zeros. 
One says that a set $E$ is {\it a polynomial approximation set relative to a domain $G$} if every zero-free holomorphic function $f$ on $G$ can be approximated uniformly on compact subsets of $G$ by polynomials having zeros only on $E$. 
Mac Lane showed that for every bounded (simply connected) 
Jordan domain $\Omega \subset \mathbb C$ with rectifiable boundary, $\partial\Omega$ is a polynomial approximation 
set relative to $\Omega$. 
Later, M.~Thompson \cite{MT}, Chui \cite{Chu1971TAMS}, and Z.~Rubinstein and E.~B.~Saff \cite{RS} strengthened this result by considering bounded polynomial approximation in the unit disc.

Furthermore, J.~Korevaar \cite{Kor1964ann} considered Mac Lane's problem in a more general setting and related it to approximation by simplest fractions, which is 
one of the main objects of consideration in the present paper. The main result of Korevaar reads as follows:




\begin{theoremKor}
Let $G$ be a bounded simply connected domain in $\mathbb C$, and let $E\subset \mathbb C$ be 
such that $E\cap G=\emptyset$. The following four statements are equivalent:

\smallskip
{\rm 1}\textup) The set $E$ is a polynomial approximation set relative to $G$.

\smallskip
{\rm 2}\textup) For every point $w\in E$, the function $z\mapsto (z-w)^{-1}$ can be
approximated locally uniformly in $G$ by polynomials having zeros only on
$E$.

\smallskip
{\rm 3}\textup) There exists a system of finite families
$\{a_{N,k}\colon 0\le k<N\}$ of points in $E$,  such that
$$
\sum_{0\le k<N}\frac1{z-a_{N,k}}\to0
$$
locally uniformly in $G$ as $N\to\infty$.

\smallskip
{\rm 4}\textup) The set $\mathop{\rm clos}E$ separates the plane,
and $G$
belongs to a bounded connected component of the set $\mathbb
C\setminus\mathop{\rm clos}E$.
\end{theoremKor}

Thus, 
the possibility of approximation 
in the sense of the first assertion of the theorem
is equivalent to the possibility of approximation of the zero function by
simplest fractions with poles on $E$.

For a given set $E\subset\mathbb C$, we consider the family of all simplest fractions with poles on the set $E$:
$$
\SF(E)=\bigg\{\sum_{0\le k<N}\frac1{z-a_{N,k}}: N\ge 1,\,a_{N,k}\in E,\, 0\le k<N\bigg\}.
$$

As a corollary of Korevaar's theorem, one has the following result about
approximation of general holomorphic functions (not necessarily zero--free ones) by
simplest fractions with restrictions on the poles. Let $G$
be a bounded simply connected domain in $\mathbb C$, and let $K$ be a compact
subset of $G$ having connected complement. Then the family $\SF(\partial G)$ is
dense in the space $A(K)$ consisting of all continuous functions on
$K$ which are holomorphic in the interior 
of $K$.
Notice that Korevaar's results were recently extended by 
P.~A.~Borodin \cite{Bor2016sbm}.

In the beginning of 1970-s, Chui \cite{Chu1971monthly} considered yet
another problem related with the approximation by the simplest fractions with poles lying on the unit circle. 
He was
interested in the question whether the set $\SF=\SF(\mathbb T)$ is dense in
the Bergman space $A^1=A^1(\mathbb D)$ 
consisting of all functions holomorphic and integrable in $\mathbb D$, and 
endowed with the usual $L^1$-norm (with respect to normalized planar Lebesgue measure $m_2$ on the unit disk, 
$dm_2(z)=\pi^{-1}\,dxdy$, $z=x+iy$).

In connection with this question Chui formulated the
following 
conjecture.

\begin{conjectureChui}
For any positive integer $N$, and for any family of points $\{a_{k}\}_{0\le k<N}$ on the unit circle, we have 
$$
\bigg\|\sum_{0\le k<N}\frac1{z-a_{k}}\bigg\|_{L^1(\mathbb D)}\ge
\bigg\|\sum_{0\le k<N}\frac1{z-e^{2\pi ik/N}}\bigg\|_{L^1(\mathbb D)}.
$$
\end{conjectureChui}

For $N\ge 1$ we denote 
$$
\Psi_N(z)=\sum_{0\le k<N}\frac1{z-e^{2\pi ik/N}}.
$$

It can be easily verified (see \cite{Chu1971monthly}) that
\begin{equation}\label{eq:chui-newman}
\|\Psi_N\|_{L^1(\mathbb D)}\ge  C
\end{equation}
for some absolute constant $C>0$. Thus, Chui's conjecture would imply that the set $\SF$ is not dense in $A^1$.


The next year after the publication of Chui's conjecture, D.~J.~Newman
\cite{New1972monthly} proved that the set $\SF$ is not dense in $A^1$. More precisely, he established that 
$$
\bigg\|\sum_{0\le k<N}\frac1{z-a_k}\bigg\|_{L^1(\mathbb D)}\ge \dfrac\pi{18}
$$
for any collection $\{a_k\}_{0\le k<N}$ of points on the unit circle.


Next, Chui studied in \cite{Chu1969PAMS}  approximation by simplest fractions in
Jordan domains in the complex plane in the Bers spaces, that is, in the
weighted $L^1$-spaces with weights $\lambda_D^{2-q}$, $0<q<\infty$, where
$\lambda_D$ is the Poincar\'e metric for the domain $D$ under consideration. It
follows from estimate \eqref{eq:chui-newman} that the set $\SF$ is not dense
in the Bers spaces in $\mathbb D$ for every $1<q\le 2$. 
It is proved in \cite{Chu1969PAMS} that for any Jordan domain $D$ and for every $q>2$, the simplest
fractions with poles on the boundary of $D$ are dense in the respective Bers
space in $D$. The results of \cite{Chu1969PAMS} were later extended in \cite{ChuShen}.

Despite considerable progress in our knowledge of simplest fractions properties, including approximation ones, the
original question posed by Chui remains open. In this paper, we resolve 
a version of 
Chui's conjecture in the context of weighted Bergman spaces of square integrable functions, that is in the Hilbert space setting.


Throughout the paper we use the following notation: for positive $A$ and $B$, 
$A \lesssim B $ means that there is a positive numerical constant $C$ such that
$A \le CB $, while $A \gtrsim B $ means that $B \lesssim A $, and
$A \asymp B $ means that both $A \lesssim B $ and
$B \lesssim A $.

\section{Main results}

Let us recall that for $\alpha>-1$ the (standard) weighted Bergman space $A^2_{\alpha}=A^2_{\alpha}(\mathbb D)$ consists of all
functions $f$ holomorphic in $\mathbb D$ for which the norm
$\|f\|_{\alpha}$ is finite, where
$$
\|f\|^2_{\alpha}=
(\alpha+1)\int_{\mathbb D} |f(z)|^2\,(1-|z|^2)^{\alpha}\,dm_2(z).
$$
We refer the reader to the book 
\cite{HedKorZhu2000book} where one can find a thorough
exposition 
of the theory of standard weighted Bergman spaces.

More generally, if $g$ is an integrable positive function on the interval $[0,1]$, we consider the corresponding
weighted Bergman space
$$
A^2_{(g)}=\Bigl\{f\in\mathop{\rm Hol}(\mathbb D):\|f\|^2_{(g)}=
\kappa_g\int_{\mathbb D} |f(z)|^2\,g(1-|z|^2)\,dm_2(z)<\infty\Bigr\},
$$
where $\kappa_g=(\int_0^1g(t)\,dt)^{-1}$ is the normalization constant. 
It can be verified directly that the fractions $(z-\lambda)^{-1}$, $\lambda\in\mathbb T$, belong to 
$A^2_{(g)}$ if and only if 
\begin{equation}\label{eq:g1}
\int_0\frac{g(s)}{s}\,ds<\infty.
\end{equation}
Recall that we denote by $\SF$ the set of all simplest fractions with poles on $\mathbb T$. 
For every $\alpha>0$ we have 
$\SF\subset A^2_{\alpha}$, 
and for every $\alpha\in(-1,0]$ we have $\SF \cap A^2_{\alpha}=\emptyset$. 
So, in what follows we suppose that $\alpha>0$. 

First, we establish an analogue of Chui's conjecture for a wide class of Bergman weighted spaces, namely, we prove that  for  $N$ point masses on the unit circle,  the norm of the corresponding Cauchy transform is the smallest if and only if these point masses  are equispaced on $\mathbb T$.

\begin{theorem}\label {thm:lb} Let $g\not\equiv 0$ be a concave non-decreasing function on $[0,1]$ 
satisfying \eqref{eq:g1} and such that $g(0)=0$. 
Then for every integer $N\ge 1$ and for every family of points $\{a_{k}\}_{0\le k<N}$ on the unit circle we have
$$
\bigg\|\sum_{0\le k<N}\frac1{z-a_k}\bigg\|_{(g)}\ge \bigg\|\sum_{0\le k<N}\frac1{z-e^{2\pi ik/N}}\bigg\|_{(g)}=
\big\|\Psi_N \big\|_{(g)}.
$$
Furthermore, if $\{a_{k}\}_{0\le k<N}$ are points on the unit circle such that 
$$
\bigg\|\sum_{0\le k<N}\frac1{z-a_k}\bigg\|_{(g)}=\big\|\Psi_N \big\|_{(g)},
$$
then the points $\{a_{k}\}_{0\le k<N}$ are equispaced on the unit circle.
\end{theorem}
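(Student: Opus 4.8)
The plan is to pass to Taylor coefficients and exploit the orthogonality of the monomials $z^n$ in $A^2_{(g)}$. Writing $a_k=e^{i\theta_k}$ and expanding each kernel as $\frac1{z-a_k}=-\sum_{n\ge 0}\bar a_k^{\,n+1}z^n$, the simplest fraction $R(z)=\sum_k(z-a_k)^{-1}$ has $n$-th Taylor coefficient $-S_{n+1}$, where $S_m=\sum_{0\le k<N}\bar a_k^{\,m}$ is the $m$-th power sum of the conjugated nodes. Since $\langle z^n,z^{n'}\rangle_{(g)}=0$ for $n\ne n'$ and $\|z^n\|_{(g)}^2=\gamma_n:=\kappa_g\int_0^1(1-t)^n g(t)\,dt$, I would obtain the clean identity $\|R\|_{(g)}^2=\sum_{m\ge 1}\gamma_{m-1}|S_m|^2$; here \eqref{eq:g1} is exactly the condition $\sum_n\gamma_n=\kappa_g\int_0^1 g(t)t^{-1}\,dt<\infty$ making the series converge. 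For the equispaced nodes $a_k=e^{2\pi ik/N}$ one has $S_m=N\,\mathbf 1[N\mid m]$, so $\|\Psi_N\|_{(g)}^2=N^2\sum_{l\ge 1}\gamma_{lN-1}$. Thus the theorem reduces to the purely spectral inequality $\sum_{m\ge 1}\gamma_{m-1}|S_m|^2\ge N^2\sum_{l\ge 1}\gamma_{lN-1}$, with equality only for roots of unity.

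Next I would remove the weight in favour of a one-parameter family of model kernels. Since $g\ge 0$, the sequence $\gamma_n$ is a Hausdorff moment sequence: $\gamma_n=\int_0^1 s^n\,d\nu(s)$ with $d\nu(s)=\kappa_g\,g(1-s)\,ds\ge 0$. Substituting and interchanging summation and integration gives $\|R\|_{(g)}^2=\int_0^1 s^{-1}P_s\,d\nu(s)$, where $P_s:=\sum_{m\ge 1}s^m|S_m|^2$. Summing the geometric series in $\sum_{m\in\mathbb Z}s^{|m|}|S_m|^2=\sum_{j,l}\mathcal P_s(\theta_j-\theta_l)$, with the Poisson kernel $\mathcal P_s(\theta)=\tfrac{1-s^2}{1-2s\cos\theta+s^2}$, shows that $P_s=\tfrac12\sum_{j,l}\mathcal P_s(\theta_j-\theta_l)-\tfrac12 N^2$. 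Because $s^{-1}\,d\nu\ge 0$, it suffices to prove the pointwise inequality $\sum_{j,l}\mathcal P_s(\theta_j-\theta_l)\ge\sum_{j,l}\mathcal P_s(\tfrac{2\pi j}N-\tfrac{2\pi l}N)$ for every fixed $s\in(0,1)$; that is, that the equispaced nodes minimize the Poisson energy.

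This last minimization is the crux and the main obstacle. The key observation I would use is that, in terms of the squared chordal distance $u=|e^{i\theta_j}-e^{i\theta_l}|^2=2-2\cos\theta$, the kernel becomes $\mathcal P_s=\tfrac{1-s^2}{(1-s)^2+s\,u}$, which is a completely monotone function of $u$. Hence the Poisson-energy minimization is an instance of the universal optimality of the regular $N$-gon on the circle: the $N$-th roots of unity minimize $\sum_{j\ne l}F(|z_j-z_l|^2)$ for every completely monotone $F$. I would either cite this classical fact or reprove it by the linear-programming/Fourier route. The subtle point is that positivity of the Fourier coefficients $\widehat{\mathcal P_s}(m)=s^{|m|}$ alone yields only the weaker bound $N^2$ (the energy of the uniform measure) and does not see the discreteness; one must combine it with the convexity of $m\mapsto s^{|m|}$ and the lattice structure of the roots of unity to control the frequencies $m\not\equiv 0\ (\mathrm{mod}\ N)$ against the deficiencies at the multiples of $N$. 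This transfer of spectral mass from off-lattice to on-lattice frequencies, driven by convexity of the coefficients, is where the real work lies, and I expect it to be the hardest step.

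Finally, for the equality statement I would use that for each fixed $s\in(0,1)$ the function $\mathcal P_s$ is \emph{strictly} completely monotone in $u$, so the regular $N$-gon is the \emph{unique} minimizer of the Poisson energy up to rotation. Since $g\not\equiv 0$ is non-decreasing, $d\nu$ has positive mass near $s=0$; hence equality in $\|R\|_{(g)}=\|\Psi_N\|_{(g)}$ forces $P_s=P_s^{\,\mathrm{eq}}$ for $\nu$-almost every $s$, hence for some $s\in(0,1)$, and the uniqueness of the Poisson minimizer then pins the nodes to an equispaced configuration. The hypotheses of concavity, monotonicity and $g(0)=0$ seem to enter here only lightly, guaranteeing the integrability used throughout and the positivity of $\nu$; the heart of the matter is the spectral reduction together with the universal optimality of the roots of unity.
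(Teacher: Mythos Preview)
Your reduction is correct and the route is genuinely different from the paper's. The paper expands $\|R\|_{(g)}^2=N\|(z-1)^{-1}\|_{(g)}^2+\kappa_g\sum_{j\ne k}\varphi_{(g)}(\vartheta_j-\vartheta_k)$, proves (Lemma~\ref{lem:convexity}) that the pair potential $\varphi_{(g)}$ is \emph{strictly convex} on $(0,2\pi)$ --- this is exactly where concavity of $g$ enters, and it is sharp there: the paper shows $\varphi_\alpha$ fails to be convex once $\alpha>1$ --- and then applies a Jensen-type lemma (Lemma~\ref{lem1}) to conclude that the double sum is minimized precisely at equispaced points. You instead disintegrate the weight as a superposition of Poisson kernels and minimize the Poisson energy for each fixed $s$. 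Note that the individual kernels $\mathcal P_s$ are \emph{not} convex on $(0,2\pi)$ (they are concave near the peak at $0$), so the paper's method cannot be run termwise; you are right that a different principle is needed. The correct black box is the universal optimality of the regular $N$-gon on $S^1$ due to Cohn--Kumar: regular $N$-gons are sharp configurations on $S^1$, hence minimize $\sum_{j\ne l}F(|z_j-z_l|^2)$ uniquely (up to rotation) for every strictly completely monotone $F$, and $\mathcal P_s=\tfrac{1-s^2}{(1-s)^2+su}$ is strictly completely monotone in $u$. With that citation your ``hardest step'' is done and your uniqueness paragraph goes through. The striking payoff is that your argument uses only $g\ge 0$, $g\not\equiv 0$ and \eqref{eq:g1}; concavity, monotonicity and $g(0)=0$ play no role, so your route in fact settles the paper's own Question~1 (the case $g(t)=t^\alpha$, $\alpha>1$) affirmatively. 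The cost is that you import a substantial optimization theorem, whereas the paper's proof is elementary and entirely self-contained.
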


\begin{corollary}\label{cor:lb}
For every $\alpha\in (0,1]$, for every integer $N\ge 1$, and for every
family of points $\{a_{k}\}_{0\le k<N}$ on the unit circle we have
$$
\bigg\|\sum_{0\le k<N}\frac1{z-a_{k}}\bigg\|_{\alpha}\ge
\big\|\Psi_N \big\|_{\alpha}.
$$
\end{corollary}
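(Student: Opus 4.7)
The plan is to obtain the corollary as a direct specialization of Theorem~\ref{thm:lb} to the weight $g(t)=t^{\alpha}$. First I would observe that the standard weighted Bergman space $A^2_{\alpha}$ coincides with $A^2_{(g)}$ for $g(t)=t^{\alpha}$: the defining integrand $|f(z)|^2(1-|z|^2)^{\alpha}$ matches $|f(z)|^2 g(1-|z|^2)$, and the normalization constant satisfies $\kappa_g = \bigl(\int_0^1 t^\alpha\,dt\bigr)^{-1}=\alpha+1$, which is exactly the prefactor used in the definition of $\|\cdot\|_\alpha$. Hence the two norms agree on $A^2_\alpha$.

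Next I would verify, one by one, that $g(t)=t^{\alpha}$ with $\alpha\in(0,1]$ satisfies every hypothesis of Theorem~\ref{thm:lb}. The function is not identically zero, it is non-decreasing since $g'(t)=\alpha t^{\alpha-1}\ge 0$, it vanishes at the origin because $\alpha>0$, and it is concave on $[0,1]$ precisely because $g''(t)=\alpha(\alpha-1)t^{\alpha-2}\le 0$ when $\alpha\le 1$. Finally, condition \eqref{eq:g1} holds since
$$
\int_0 \frac{g(s)}{s}\,ds = \int_0 s^{\alpha-1}\,ds < \infty
$$
for every $\alpha>0$. Applying Theorem~\ref{thm:lb} with this choice of $g$ yields the inequality of the corollary.

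There is essentially no obstacle to this argument; the corollary is a clean consequence of the main theorem. The only subtle point worth highlighting is that the restriction $\alpha\le 1$ in the corollary stems exactly from the concavity requirement on $g$ in Theorem~\ref{thm:lb}, since $t^\alpha$ becomes convex for $\alpha>1$ and the hypothesis breaks down outside this range.
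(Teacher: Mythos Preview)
Your argument is correct and is exactly the intended one: the paper states the corollary immediately after Theorem~\ref{thm:lb} without a separate proof, precisely because $g(t)=t^{\alpha}$ with $\alpha\in(0,1]$ satisfies all the hypotheses of that theorem (concave, non-decreasing, $g(0)=0$, condition~\eqref{eq:g1}), and $A^2_{\alpha}=A^2_{(g)}$ with matching normalization $\kappa_g=\alpha+1$.
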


It is easy to see that the sequence of norms
$$
 \big\|\Psi_N  \big\|_{\alpha}    =\bigg\| \frac{Nz^{N-1}}{z^N-1}\bigg\|_{\alpha}
$$
tends to zero as $N\to \infty$ for $0 < \alpha <1$, tends to a positive finite number for $\alpha= 1$, and tends to $+\infty$ for $\alpha >1$. The following result provides with the exact asymptotics.
As usual, we denote by $\zeta$ and $\Gamma$ the Riemann zeta-function and the Gamma function, respectively.

\begin{theorem}\label{pro:limit} For every $\alpha>0$ we have
$$
\lim_{N\to\infty}N^{\alpha-1}\|\Psi_N\|^2_{\alpha}=\Gamma(\alpha+2)\zeta(\alpha+1)>0.
$$
\end{theorem}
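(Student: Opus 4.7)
The plan is to reduce the computation of $\|\Psi_N\|_\alpha^2$ to an explicit series in $N$ via the orthogonality of monomials in $A^2_\alpha$, and then to extract its asymptotics by comparison with the Riemann zeta function.

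First I would expand $\Psi_N$ in a power series. From
$$
\Psi_N(z)=\frac{Nz^{N-1}}{z^N-1}=-N\sum_{k\ge 1}z^{kN-1},
$$
only the monomials of degrees $kN-1$, $k\ge 1$, occur. In $A^2_\alpha$ these monomials are mutually orthogonal, and a direct beta-integral computation gives
$$
\|z^n\|_\alpha^2=\frac{\Gamma(n+1)\,\Gamma(\alpha+2)}{\Gamma(n+\alpha+2)}.
$$
Combining these two facts yields the closed-form expression
$$
\|\Psi_N\|_\alpha^2=N^2\,\Gamma(\alpha+2)\sum_{k\ge 1}\frac{\Gamma(kN)}{\Gamma(kN+\alpha+1)}.
$$

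To extract the asymptotics I would rewrite
$$
N^{\alpha-1}\|\Psi_N\|_\alpha^2=\Gamma(\alpha+2)\sum_{k\ge 1}\frac{1}{k^{\alpha+1}}\cdot\frac{(kN)^{\alpha+1}\Gamma(kN)}{\Gamma(kN+\alpha+1)}.
$$
The classical asymptotic $\Gamma(x+\alpha+1)\sim x^{\alpha+1}\Gamma(x)$ as $x\to\infty$ shows that each term of the series tends to $k^{-\alpha-1}$ as $N\to\infty$. To justify termwise passage to the limit, I would verify a uniform bound
$(kN)^{\alpha+1}\Gamma(kN)/\Gamma(kN+\alpha+1)\le C_\alpha$ valid for all $kN\ge 1$; this follows from log-convexity of $\Gamma$, or equivalently from Wendel-type inequalities together with the factorisation $\Gamma(x+\alpha+1)=\prod_{i=0}^{n-1}(x+\beta+i)\,\Gamma(x+\beta)$ with $\alpha+1=n+\beta$. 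Dominated convergence against the summable majorant $C_\alpha k^{-\alpha-1}$ then gives
$$
\lim_{N\to\infty}N^{\alpha-1}\|\Psi_N\|_\alpha^2=\Gamma(\alpha+2)\sum_{k\ge 1}k^{-\alpha-1}=\Gamma(\alpha+2)\,\zeta(\alpha+1),
$$
and positivity of the right-hand side is immediate.

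The whole argument is essentially routine once the correct regrouping by powers of $k$ is chosen; the only step requiring any care is the uniform Gamma-ratio bound, but this is a classical estimate and presents no genuine obstacle. A useful sanity check consists in performing the substitution $s=r^N$ directly in the integral defining $\|\Psi_N\|_\alpha^2$, using the Poisson kernel identity $\int_0^{2\pi}|se^{i\phi}-1|^{-2}\,d\phi=2\pi/(1-s^2)$, and recognising the limiting expression as a multiple of the classical representation $\Gamma(\alpha+1)\zeta(\alpha+1)=\int_0^\infty t^\alpha/(e^t-1)\,dt$; this independent computation reproduces the same limit.
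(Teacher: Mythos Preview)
Your argument is correct. The orthogonal expansion $\Psi_N=-N\sum_{k\ge1}z^{kN-1}$, the beta-integral evaluation of $\|z^n\|_\alpha^2$, and the resulting series in $k$ are all accurate, and the dominated-convergence step is justified since $x\mapsto x^{\alpha+1}\Gamma(x)/\Gamma(x+\alpha+1)$ is continuous on $[1,\infty)$ with limit~$1$ at infinity, hence bounded.

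The paper takes a different route: it first performs the angular integration using $\int_0^{2\pi}|1-xe^{it}|^{-2}\,dt=2\pi/(1-x^2)$, arriving at a single radial integral, and then substitutes $r=e^{-s/(2N)}$ to recognise the limit as $(\alpha+1)\int_0^\infty s^\alpha(e^s-1)^{-1}\,ds=\Gamma(\alpha+2)\zeta(\alpha+1)$ via dominated convergence on the integral. This is essentially the computation you describe as a sanity check. Your Taylor-coefficient approach makes the appearance of $\zeta(\alpha+1)$ transparent from the outset and avoids any change of variables; the paper's integral approach, on the other hand, does not rely on the explicit beta form of $\|z^n\|_\alpha^2$ and therefore extends immediately to general weights $g$ (as exploited in Proposition~\ref{pro}) and also shows that $N^{\alpha-1}\|\Psi_N\|_\alpha^2$ is monotone in~$N$ (Remark~\ref{r12}), since $N(1-e^{-s/N})$ increases with~$N$.
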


In particular,  
$$
\lim_{N\to\infty} \|\Psi_N\|_{1}=\frac{\pi}{\sqrt{3}}.
$$

For general $g$, we can obtain weaker asymptotical estimates on the norms $\|\Psi_N\|_{(g)}$.

\begin{proposition}\label{pro} Let $g$ satisfy \eqref{eq:g1}. Then   
$$
\|\Psi_N\|^2_{(g)}\asymp N\int_0^{1/N}\frac{g(t)\,dt}{t}+ N^2\int_{1/N}^1(1-t)^Ng(t)\,dt,\qquad N\to\infty.
$$
\end{proposition}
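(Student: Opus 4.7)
The plan is to reduce $\|\Psi_N\|_{(g)}^2$ to a one-dimensional integral on $[0,1]$ in the variable $t = 1-|z|^2$, and then to split the resulting integral at $t = 1/N$ to recover the two terms on the right-hand side.

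Using $\Psi_N(z) = Nz^{N-1}/(z^N-1)$, I pass to polar coordinates $z = re^{i\theta}$; by $2\pi/N$-periodicity in $\theta$ together with the Poisson-type identity $\int_0^{2\pi} d\theta/|1-\rho e^{i\theta}|^2 = 2\pi/(1-\rho^2)$ (applied with $\rho = r^N$), the angular integral of $|\Psi_N|^2$ equals $2\pi N^2 r^{2N-2}/(1-r^{2N})$. After the substitution $t = 1 - r^2$ this yields
$$
\|\Psi_N\|_{(g)}^2 = \kappa_g N^2 \int_0^1 \frac{(1-t)^{N-1}\, g(t)}{1 - (1-t)^N}\, dt.
$$

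Splitting this integral at $t = 1/N$, on $[0, 1/N]$ I use $(1-t)^{N-1} \asymp 1$ and, via Bernoulli's inequality together with the elementary bound $1 - e^{-x} \ge (1 - e^{-1})\, x$ for $x \in [0,1]$, $1 - (1-t)^N \asymp Nt$. This piece then contributes $\asymp N \int_0^{1/N} g(t)/t\, dt$, matching the first summand on the right-hand side. On $[1/N, 1]$, the lower bound $1 - (1-t)^N \ge 1 - (1-1/N)^N \ge 1 - e^{-1}$ keeps the denominator away from zero, so this piece is $\asymp N^2 \int_{1/N}^1 (1-t)^{N-1}\, g(t)\, dt$.

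The final step is to show $\int_{1/N}^1 (1-t)^{N-1} g\, dt \asymp \int_{1/N}^1 (1-t)^N g\, dt$. The lower direction is trivial. For the upper direction I split at $t = 1/2$: on $[1/N, 1/2]$ one has the direct pointwise bound $(1-t)^{N-1} \le 2(1-t)^N$; on $[1/2, 1]$ both integrands are exponentially small in $N$, and the residual is absorbed by the main term $N^2 \int_{1/N}^1 (1-t)^N g\, dt$, which, since $g$ is a nonzero nonnegative integrable function, satisfies a matching exponential lower bound $\gtrsim N^2 r^N$ for some $r \in (0,1)$ depending on $g$. The main obstacle is precisely this last comparison, since the pointwise ratio $(1-t)^{N-1}/(1-t)^N = 1/(1-t)$ is unbounded as $t \to 1$; the integrability of $g$ and the condition \eqref{eq:g1} are needed to ensure that the exponentially small tail is controlled by the genuine main term, with implicit constants depending on $g$ but uniform in $N$.
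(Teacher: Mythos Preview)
Your proof is correct and follows the same route as the paper's: compute the angular integral via the Poisson identity, reduce to a one-dimensional integral, and split at the scale $1/N$. Your treatment of the $(1-t)^{N-1}$ versus $(1-t)^N$ discrepancy is in fact more careful than the paper's own argument, which simply replaces $r^{N-1}$ by $r^N$ without comment; your absorption step at the end goes through cleanly with $r=1/2$ once you use that $g$ is \emph{positive} on $[0,1]$ (the standing hypothesis in the paper), so that $\int_{1/N}^{1/2}(1-t)^N g\,dt\ge (1/2)^N\int_{1/N}^{1/2}g\,dt\gtrsim (1/2)^N$, matching the bound $(1/2)^{N-1}\|g\|_1$ on the tail.
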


\begin{corollary}\label{coro} {\rm (A)} For every $c>0$ we have 
$$
\exp(-cN)\lesssim \|\Psi_N\|_{(g)}=o(N^{1/2}),\qquad N\to\infty.
$$

{\rm (B)} If $g(t)=o(t)$, $t\to 0$, then $\|\Psi_N\|_{(g)}=o(1)$, $N\to\infty$.

{\rm (C)} If $q>1$ and $g(t)=\log^{-q}(2/t)$, then 
$$
\|\Psi_N\|^2_{(g)}\asymp \frac{N}{\log^{q-1}N},\qquad N\to\infty.
$$

{\rm (D)} If $q>0$ and $g(t)=\exp(-t^{-q})$, then 
$$
\log(1/\|\Psi_N\|_{(g)})\asymp N^{q/(q+1)},\qquad N\to\infty.
$$
\end{corollary}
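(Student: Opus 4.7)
The plan is to derive all four parts from Proposition~\ref{pro}, reducing each statement to the asymptotic behavior of
\[
I_1(N) := N\int_0^{1/N}\frac{g(t)}{t}\,dt, \qquad I_2(N) := N^2\int_{1/N}^1(1-t)^N g(t)\,dt,
\]
in view of $\|\Psi_N\|_{(g)}^2 \asymp I_1(N) + I_2(N)$.

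For (A), I would handle the upper bound as follows. By \eqref{eq:g1} the tail $\int_0^{1/N}g(t)/t\,dt \to 0$, so $I_1(N) = N\cdot o(1) = o(N)$. For $I_2(N)/N = N\int_{1/N}^1(1-t)^N g(t)\,dt$, I would use $N(1-t)^N \le Ne^{-Nt} \le 1/(et)$ on $(0,1]$, which follows from $xe^{-x}\le 1/e$. Splitting the integral at any $\epsilon>0$, the piece on $[1/N,\epsilon]$ is bounded by $e^{-1}\int_0^\epsilon g(t)/t\,dt$, which is arbitrarily small by \eqref{eq:g1}, while the piece on $[\epsilon,1]$ is bounded by $N(1-\epsilon)^N\|g\|_{L^1} = o(1)$. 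Hence $I_2(N) = o(N)$ and $\|\Psi_N\|_{(g)} = o(N^{1/2})$. For the lower bound, given $c>0$ I pick $\epsilon\in(0,(1-e^{-c})/2)$; since $g>0$ a.e., there is a subset $A\subset(\epsilon,2\epsilon)$ of positive measure on which $g\ge c_\epsilon>0$. For $N>1/\epsilon$ we have $I_2(N)\ge c_\epsilon|A|N^2(1-2\epsilon)^N$, so $\|\Psi_N\|_{(g)} \gtrsim N(1-2\epsilon)^{N/2} \ge e^{-cN}$ for $N$ large, as the exponential rate is strictly less than $c$.

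Parts (B), (C), (D) are direct asymptotic computations from the same formula. For (B), $g(t)=o(t)$ gives $g(t)/t=o(1)$, hence $I_1(N)=o(1)$; and $g(t)\le\delta t$ near $0$, combined with $\int_0^1 t(1-t)^N\,dt = 1/((N+1)(N+2))$, yields $I_2(N)\le\delta+o(1)$, so $I_2(N)=o(1)$. For (C), the substitution $u=\log(2/t)$ in $I_1$ gives directly $I_1(N) = N(\log 2N)^{1-q}/(q-1) \asymp N/\log^{q-1}N$; a change of variables $s=Nt$ in $I_2$, with $(1-t)^N\le e^{-Nt}$ and $\log(2N/s)\asymp\log N$ on the relevant Laplace window, shows $I_2(N)\asymp N/\log^q N$, which is dominated by $I_1$ by a factor of $\log N$. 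For (D), the substitution $u=t^{-q}$ in $I_1$ gives $I_1(N)=(N/q)\int_{N^q}^\infty e^{-u}/u\,du \sim q^{-1}N^{1-q}e^{-N^q}$, so $\log(1/I_1)\asymp N^q$. Applying Laplace's method to $I_2$ with phase $Nt+t^{-q}$ locates the saddle at $t_\star=(q/N)^{1/(q+1)}$ with critical value $(q+1)(N/q)^{q/(q+1)}$, giving $\log(1/I_2(N)) \asymp N^{q/(q+1)}$. Since $q>q/(q+1)$ for $q>0$, $I_2$ dominates, and the desired asymptotics $\log(1/\|\Psi_N\|_{(g)}) \asymp N^{q/(q+1)}$ follow.

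The main obstacle is part (A): both the sub-square-root upper bound and the sub-exponential lower bound must be extracted from only the mild hypothesis \eqref{eq:g1} and a.e.\ positivity of $g$. The delicate point is the $I_2$-estimate for the upper bound, where the inequality $Nte^{-Nt}\le 1/e$ plays the crucial role of converting the weight $N(1-t)^N$ into a function integrable against $g(t)/t$. Once part (A) is in place, parts (B)--(D) are routine asymptotics: dominated convergence, direct substitution, and Laplace's method, respectively.
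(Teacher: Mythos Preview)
Your proposal is correct and follows precisely the route the paper intends: the paper gives no separate proof of this corollary, leaving it as an immediate consequence of Proposition~\ref{pro}, and you supply exactly the details needed to extract each of (A)--(D) from the asymptotic $\|\Psi_N\|_{(g)}^2 \asymp I_1(N)+I_2(N)$. The arguments---the $Nte^{-Nt}\le 1/e$ trick for the $o(N)$ upper bound in (A), the positivity-of-$g$ lower bound, dominated-convergence for (B), the substitutions in (C), and Laplace's method in (D)---are all sound.
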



We do not know whether the equispaced distribution remains to be optimal for the spaces $A^2_{ \alpha}$ when $\alpha>1$. Nevertheless, we show that asymptotically this is true up to a constant:

\begin{theorem}\label{t4} Let $\alpha>1$. For some absolute constant $C_1>0$ and for some number $C_2(\alpha)>0$, we have
$$
\alpha C_1 N^{1-\alpha}\le \min_{a_k\in\mathbb T,\,0\le k<N} \bigg\|\sum_{0\le k<N}\frac1{z-a_k}\bigg\|^2_{\alpha}\le C_2(\alpha)N^{1-\alpha},\qquad N\ge 1.
$$
\end{theorem}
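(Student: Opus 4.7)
The strategy is to prove the upper and lower bounds separately. The upper bound is immediate from the equispaced configuration: one has $\min \le \|\Psi_N\|^2_{\alpha}$, and Theorem~\ref{pro:limit} gives $\|\Psi_N\|^2_{\alpha} \sim \Gamma(\alpha+2)\zeta(\alpha+1)\,N^{1-\alpha}$ as $N\to\infty$, so any $C_2(\alpha)$ slightly larger than $\Gamma(\alpha+2)\zeta(\alpha+1)$ (adjusted to absorb the finitely many small~$N$) works.

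For the lower bound, I would combine the Taylor expansion of a simplest fraction with a Fej\'er positivity argument. Writing $a_k = e^{-i\theta_k}$, one expands at the origin
$$
f(z) = \sum_{k=0}^{N-1}\frac{1}{z-a_k} = -\sum_{n\ge 0} T_{n+1}\,z^n,\qquad T_m = \sum_{k=0}^{N-1} e^{im\theta_k},
$$
and orthogonality of monomials in $A^2_{\alpha}$ gives
$$
\|f\|^2_{\alpha} = \sum_{n\ge 0} w_n\,|T_{n+1}|^2,\qquad w_n = \|z^n\|^2_{\alpha} = \frac{n!\,\Gamma(\alpha+2)}{\Gamma(n+\alpha+2)},
$$
where the sequence $w_n$ is decreasing and satisfies $w_n \asymp \Gamma(\alpha+2)\,n^{-\alpha-1}$ as $n\to\infty$. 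Using the nonnegativity of the Fej\'er kernel $F_M$,
$$
\sum_{|m|<M}\bigl(1-|m|/M\bigr)|T_m|^2 = \sum_{j,k} F_M(\theta_j-\theta_k) \ge N\,F_M(0) = NM.
$$
Taking $M = 2N$ and subtracting $|T_0|^2 = N^2$ yields the key estimate $\sum_{m=1}^{2N-1}|T_m|^2 \ge N^2/2$.

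Combining these two ingredients, the monotonicity of $w_n$ gives
$$
\|f\|^2_{\alpha} \ge w_{2N-2}\sum_{m=1}^{2N-1}|T_m|^2 \ge \tfrac{1}{2}\,N^2\,w_{2N-2} \gtrsim \Gamma(\alpha+2)\,N^{1-\alpha}
$$
for $N$ sufficiently large compared to $\alpha$, using the standard estimate $\Gamma(2N+\alpha)/\Gamma(2N-1) \le (2N+\alpha)^{\alpha+1}$ to lower bound $w_{2N-2}$. Since $\Gamma(\alpha+2)\ge \alpha$ for $\alpha\ge 1$, this produces $\|f\|^2_{\alpha} \ge \alpha C_1\,N^{1-\alpha}$ with an absolute $C_1$, after adjusting the constant to absorb the finitely many small-$N$ cases. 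The main technical point is the $\alpha$-bookkeeping: the inequality for $w_{2N-2}$ inflates a factor of order $3^{\alpha+1}$ when $N$ is comparable to $\alpha$, which must be compensated by the factorial growth of $\Gamma(\alpha+2)$; this is precisely why the lower bound has only linear $\alpha$-dependence, far below the $\Gamma(\alpha+2)$ achieved by the equispaced upper bound, and why closing the gap (i.e.\ showing that equispaced is the true minimizer for $\alpha>1$) remains open.
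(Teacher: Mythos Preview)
Your proof uses exactly the same key engine as the paper's: expand the simplest fraction as $-\sum_{n\ge 0}T_{n+1}z^n$ and appeal to Fej\'er positivity (the Cassels argument) to obtain $\sum_{m=1}^{2N-1}|T_m|^2\ge N^2/2$. The packaging differs only superficially: the paper first restricts the Bergman integral to the annulus $N^{-1}<1-|z|^2<2N^{-1}$ and then expands, while you expand first and then discard all but the first $2N-1$ Taylor weights via monotonicity. These are equivalent reductions.

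There is, however, a genuine gap in your $\alpha$-bookkeeping. Your lower bound on $w_{2N-2}$ via the crude ratio estimate $\Gamma(2N+\alpha)/\Gamma(2N-1)\le (2N+\alpha)^{\alpha+1}$ only gives $\tfrac12 N^2 w_{2N-2}\gtrsim \Gamma(\alpha+2)(2+\alpha/N)^{-\alpha-1}N^{1-\alpha}$. When $N$ is small compared to $\alpha$ (say $N=2$, $\alpha$ large) the factor $(2+\alpha/N)^{\alpha+1}$ swamps $\Gamma(\alpha+2)$ and the bound degenerates; since the threshold ``$N$ large compared to $\alpha$'' depends on $\alpha$, you cannot absorb these cases into an \emph{absolute} $C_1$ as claimed. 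The paper sidesteps this by pulling out $(1-|z|^2)^\alpha\ge N^{-\alpha}$ on the annulus before expanding, so that the remaining integral is $\alpha$-free and the prefactor is exactly $(\alpha+1)N^{-\alpha}$. Equivalently, in your language: use the integral form $w_{2N-2}=(\alpha+1)\int_0^1 t^{2N-2}(1-t)^\alpha\,dt$ and restrict to $t\in[1-2/N,1-1/N]$ to obtain directly $w_{2N-2}\ge c(\alpha+1)N^{-\alpha-1}$ with an absolute $c$ for all $N\ge 3$, which closes the gap cleanly.
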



Given a weighted Bergman space, it is natural to ask which elements of the space can be approximated in norm by the  simplest fractions with poles on $\mathbb
T$.  Our next result answers this question. It turns out that one can approximate either ``everything" 
or ``nothing" depending on $g$: 

\begin{theorem}\label{thm:clos} Let $g\not\equiv 0$  
satisfy \eqref{eq:g1}. Then 
$$
\mathop{\rm clos}{}_{A^2_{(g)}}\SF=\begin{cases}
\SF, \qquad t=O(g(t)),\,t\to0, \\
A^2_{(g)},\qquad g(t)=o(t),\,t\to0.
\end{cases}
$$
\end{theorem}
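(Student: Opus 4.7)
We treat the two cases of the statement separately.

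\emph{The closed case} $t=O(g(t))$. We aim to show $\mathop{\rm clos}_{A^2_{(g)}}\SF\subseteq\SF$. Suppose $r_n\in\SF$ converges in $A^2_{(g)}$ to some $f$, with $r_n=\sum_{0\le k<N_n}(z-a_{n,k})^{-1}$. Since $(r_n)$ is norm-bounded, Theorem~\ref{thm:lb} yields $\|r_n\|_{(g)}\ge\|\Psi_{N_n}\|_{(g)}$. Using Proposition~\ref{pro} together with the hypothesis $g(t)\gtrsim t$, one obtains $\|\Psi_N\|_{(g)}^2\gtrsim N\int_0^{1/N}\!dt+N^2\int_{1/N}^1(1-t)^N t\,dt\gtrsim N$, so $\{N_n\}$ is bounded. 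Extract a subsequence with $N_n\equiv N$ and, by compactness of $\mathbb T^N$, a further subsequence with $a_{n,k}\to a_k$ on $\mathbb T$. The conclusion $f=\sum_k(z-a_k)^{-1}\in\SF$ (with coinciding $a_k$'s producing higher multiplicities, which is allowed by the definition of $\SF$) follows from the continuity of the map $b\mapsto(z-b)^{-1}$ from $\mathbb T$ to $A^2_{(g)}$, verified by dominated convergence using \eqref{eq:g1}.

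\emph{The dense case} $g(t)=o(t)$. By Corollary~\ref{coro}(B) we already know $\|\Psi_N\|_{(g)}\to 0$, i.e.\ $0\in\mathop{\rm clos}\SF$. To get full density it is enough to approximate in $A^2_{(g)}$ each polynomial $q$, since polynomials are dense in $A^2_{(g)}$ under \eqref{eq:g1}. The standard device is to write $q=h'/h$ with $h(z)=\exp\int_0^z q(\zeta)\,d\zeta$, an entire zero-free function bounded on $\overline{\mathbb D}$. If polynomials $P_n$ with zeros $z_{n,1},\ldots,z_{n,M_n}$ on $\mathbb T$ approximate $h$ in a sufficiently strong sense, then $P_n'/P_n=\sum_j(z-z_{n,j})^{-1}\in\SF$ will approximate $q$ in $A^2_{(g)}$.

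The required approximation is the content of an $L^2$-analogue of Thompson's classical dominated-approximation theorem. Thompson supplies $P_n$ with $|P_n|\le C|h|$ on $\overline{\mathbb D}$ and $P_n\to h$ locally uniformly on $\mathbb D$; to promote this into $A^2_{(g)}$-convergence of logarithmic derivatives, split $\mathbb D$ into an inner disk $\{|z|\le 1-\delta\}$ and the annulus $\{1-\delta<|z|<1\}$. On the inner part, Cauchy's formula turns uniform convergence of $P_n$ into uniform convergence of $P_n'/P_n$ to $q$. On the annulus, the hypothesis $g(t)=o(t)$ applied through the estimate of Proposition~\ref{pro} (together with a Thompson-like control on the zero distribution of $P_n$) makes the $A^2_{(g)}$-contribution of both $P_n'/P_n$ and $q$ small as $\delta\to 0$, uniformly in $n$.

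\emph{Main obstacle.} The hard step is precisely the $L^2$-Thompson construction. The classical dominated approximation controls pointwise values of $P_n$, but one needs to build $P_n$ with zeros sufficiently well-spread on $\mathbb T$ so that $\|P_n'/P_n-q\|_{(g)}\to 0$. This requires, beyond the mere existence of $P_n\to h$, a quantitative dictionary between how close $P_n$ approximates $h$ in a weighted-boundary sense and how close the corresponding simple fractions approximate $q$ in $A^2_{(g)}$, with the condition $g(t)=o(t)$ providing just enough decay of the weight near $\mathbb T$ to close the argument.
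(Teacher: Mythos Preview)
Your argument for the closed case contains a genuine error. You claim that $t=O(g(t))$ together with Proposition~\ref{pro} yields $\|\Psi_N\|_{(g)}^2\gtrsim N$, but this is false at the critical endpoint $g(t)=t$: there
\[
N\int_0^{1/N}\frac{g(t)}{t}\,dt=N\int_0^{1/N}dt=1,\qquad
N^2\int_{1/N}^1(1-t)^N t\,dt\asymp 1,
\]
so $\|\Psi_N\|_{(g)}^2\asymp 1$ (consistently with Theorem~\ref{pro:limit}, which gives $\|\Psi_N\|_1^2\to\pi^2/3$). Hence boundedness of $\|r_n\|_{(g)}$ together with $\|r_n\|_{(g)}\ge\|\Psi_{N_n}\|_{(g)}$ does \emph{not} force $\{N_n\}$ to be bounded, and your compactness argument collapses. (There is also a secondary issue: Theorem~\ref{thm:lb} requires $g$ concave, non-decreasing, and $g(0)=0$, none of which follow from $t=O(g(t))$; but this could be repaired by first reducing to $g(t)=t$ via norm comparison.)

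The paper's route is genuinely more delicate: it reduces to $g(t)=t$ and then proves, for every $f\in A^2_1$,
\[
\liminf_{N\to\infty}\inf_{h\in\SF_N}\|f-h\|_1^2\ge\frac{\pi^2}{3},
\]
which, combined with compactness of each $\SF_N$, shows $\SF$ is closed. The key device is to pass to a \emph{truncated} weight $g_\delta(t)=\min(\delta,t)$, which is concave (so Theorem~\ref{thm:lb} applies) and for which $\|f\|_{(g_\delta)}$ can be made small; one then obtains a lower bound on $\|\Psi_N\|_{(g_\delta)}$ that contradicts Theorem~\ref{pro:limit}. In other words, the reason $N_n$ stays bounded along a convergent sequence is not that $\|\Psi_N\|$ blows up---it does not---but that $\SF_N$ stays uniformly far from any fixed $f$ as $N\to\infty$.

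Your treatment of the dense case is essentially in line with the paper: the ``$L^2$-Thompson construction'' you identify as the main obstacle is exactly Theorem~\ref{thm10}, and once that is in hand the argument closes as you outline (the paper applies Theorem~\ref{thm10} directly to $f\in H^\infty$ rather than going through $q=h'/h$, but this is a cosmetic difference).
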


In particular, $\SF$ is closed nowhere dense in $A^2_{\alpha}$ when $0<\alpha\le 1$ and is dense in $A^2_{\alpha}$ when $\alpha>1$. 

In the case $\alpha=1$ we have a more precise result. Set 
$$
\SF_N=\Bigl\{\sum_{0\le j<N}\frac1{z-z_{N,j}}:z_{N,j}\in\mathbb T,\,0\le j<N\Bigr\}.
$$
The sets $\SF_N$ are compact in $A^2_{\alpha}$ for $\alpha>0$, $N\ge 1$.

\begin{theorem}\label{thm8} For every $f\in A^2_1$, we have 
$$
\lim_{N\to\infty}\dist_{A^2_1}(f,\SF_N)=\frac{\pi}{\sqrt3}.
$$
\end{theorem}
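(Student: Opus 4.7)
The plan is to prove $\limsup_N\dist_{A^2_1}(f,\SF_N)\le\pi/\sqrt3$ and $\liminf_N\dist_{A^2_1}(f,\SF_N)\ge\pi/\sqrt3$ separately. Throughout I use that $\|g\|_1^2=2\sum_n|g_n|^2/((n+1)(n+2))$ for $g=\sum g_nz^n\in A^2_1$, that weak convergence in $A^2_1$ amounts to coefficient-wise convergence with uniformly bounded norms, and that for $r=\sum_k(z-a_k)^{-1}\in\SF_N$ the Taylor coefficients are $-T_n$ with $T_n=\sum_k\bar a_k^{n+1}$.

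For the upper bound, by the $1$-Lipschitz property of $\dist(\cdot,\SF_N)$ and the density of polynomials in $A^2_1$, it suffices to treat polynomials $p=\sum_{n=0}^dp_nz^n$. I construct $r_N\in\SF_N$ by perturbing the equispaced configuration: $a_{k,N}=e^{i(2\pi k/N+\varepsilon_k)}$ with real $\varepsilon_k=O(1/N)$ prescribed via the discrete Fourier transform $\widehat\varepsilon(m)$ (supported in $\{1,\ldots,d+1\}$ and its reality-conjugate mirror), chosen after linearization of $T_n^{r_N}$ in $\varepsilon$ so that $T_n^{r_N}=-p_n$ for $0\le n\le d$; thus $r_N\to p$ weakly in $A^2_1$. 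The high-frequency Taylor coefficients are computed via the Jacobi--Anger expansion $e^{-ix\sin\theta}=\sum_m(-1)^mJ_m(x)e^{im\theta}$: for $j\ge 1$, $0\le l<N$ one obtains $T_{jN+l-1}\approx N(-1)^lJ_l(c_j)$ for constants $c_j$ depending on $p$. Using $\sum_lJ_l(c_j)^2=1$ and the weight $2/((jN+l)(jN+l+1))\asymp 2/(jN)^2$, each octave $j\ge 1$ contributes $2/j^2+o(1)$ to $\|r_N\|_1^2$, summing to $\pi^2/3$; together with $\|p\|_1^2$ from the low-frequency block, $\|r_N\|_1^2\to\|p\|_1^2+\pi^2/3$. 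Combined with $\langle p,r_N\rangle_1\to\|p\|_1^2$ from weak convergence, expanding $\|p-r_N\|_1^2=\|p\|_1^2-2\Re\langle p,r_N\rangle_1+\|r_N\|_1^2$ yields the limit $\pi^2/3$.

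For the lower bound, let $r_{N_j}\in\SF_{N_j}$ realize the liminf (assumed finite, else there is nothing to prove); then $(r_{N_j})$ is bounded in $A^2_1$ and weakly subconvergent to some $h\in A^2_1$. Weak convergence gives $\lim_j\|f-r_{N_j}\|_1^2=\|f-h\|_1^2+\lim_j\|r_{N_j}-h\|_1^2$, reducing everything to the \emph{key lemma}: for every $r_N\in\SF_N$ with $r_N\rightharpoonup h$ in $A^2_1$, $\liminf_N\|r_N-h\|_1^2\ge\pi^2/3$, equivalently (via the identity $\|r_N\|^2-\|h\|^2=\|r_N-h\|^2+o(1)$) $\liminf_N\|r_N\|_1^2\ge\|h\|_1^2+\pi^2/3$.

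The key lemma is the main obstacle, since Theorem~\ref{thm:lb} provides only the $h$-independent bound $\|r_N\|^2\ge\|\Psi_N\|^2\to\pi^2/3$. My plan is to extend the Parseval/Bessel decomposition from the upper bound to arbitrary pole configurations: decompose $\|r_N\|_1^2=\sum_{j\ge 0}\sum_l|T_{jN+l-1}|^2\cdot 2/((jN+l)(jN+l+1))$; for $j=0$ and $l$ bounded, Fatou together with $T_{l-1}\to-h_{l-1}$ yields a contribution $\ge\|h\|_1^2$; for each $j\ge 1$, the Parseval identity $\sum_{l=0}^{N-1}|T_{jN+l-1}|^2\approx N^2$ combined with the uniform weight $\asymp 2/(jN)^2$ gives $\ge 2/j^2-o(1)$ per octave, summing to $\pi^2/3$. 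The delicate step is making the Parseval approximation rigorous for pole configurations that are not a small perturbation of the equispaced one; this relies on the fact that boundedness of $\|r_N\|_1$ forces the empirical pole measure $N^{-1}\sum_k\delta_{a_{k,N}}$ to converge weakly to the uniform measure on $\mathbb T$, which provides enough structure to control the low/high-frequency mixing and to absorb the error terms arising from the $l$-dependent phase factor $e^{-il\varepsilon_{k,N}}$.
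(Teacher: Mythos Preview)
Your lower bound argument has a genuine gap. The ``Parseval identity'' $\sum_{l=0}^{N-1}|T_{jN+l-1}|^2\approx N^2$ that you invoke for each octave $j\ge 1$ simply does not hold for general configurations on $\mathbb T$, and weak convergence of the empirical pole measure to the uniform measure is not enough to force it. Take, for instance, $N$ even and let the poles be the $(N/2)$th roots of unity, each repeated twice. Then $T_n=N$ when $(N/2)\mid(n+1)$ and $T_n=0$ otherwise, so every octave contains \emph{two} nonzero terms and $\sum_{l}|T_{jN+l-1}|^2=2N^2$; the empirical measure here is exactly uniform. Worse, even if you had $\sum_l|T_{jN+l-1}|^2\ge N^2$, the weight $2/((jN+l)(jN+l+1))$ is not uniform over $l$: bounding it below by its minimum $2/((j+1)N)^2$ yields only $\sum_{j\ge 1}2/(j+1)^2=\pi^2/3-2$, not $\pi^2/3$. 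There is no mechanism in your sketch that rules out the octave mass sitting near the small-weight end.

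The idea you are missing is precisely the one you dismiss: Theorem~\ref{thm:lb} \emph{is} enough, once one exploits the freedom in the weight. The paper replaces $g(t)=t$ by the truncated weight $g_\delta(t)=\min(\delta,t)$, which is still concave and nondecreasing with $g_\delta(0)=0$, so Theorem~\ref{thm:lb} applies to it. If $\|f-f_m\|_1^2\le \pi^2/3-4\varepsilon$ along a subsequence, one first chooses $\delta$ small so that $\int|f|^2g_\delta(1-|z|^2)\,dm_2$ is negligible, transfers the bound to $\int|f_m|^2 g_\delta$, then uses Theorem~\ref{thm:lb} for the weight $g_\delta$ to pass to $\int|\Psi_{N_m}|^2 g_\delta$; finally, since $\Psi_{N_m}\to 0$ locally uniformly and $g_\delta=t$ near the boundary, this contradicts Theorem~\ref{pro:limit}. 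This concave-weight trick is what produces the $h$-dependent (in your language) lower bound; coefficient-by-coefficient bookkeeping does not.

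Your upper bound sketch is also shaky. In the phase $(jN+l)\varepsilon_k$ the factor in front of $\varepsilon_k$ runs from $jN$ to $(j+1)N-1$ as $l$ varies, so there is no single ``constant $c_j$'' for which a Jacobi--Anger expansion yields $T_{jN+l-1}\approx N(-1)^lJ_l(c_j)$; and as noted above, the Parseval sum over $l$ for a perturbed configuration contains off-diagonal terms of size $O(N/|k-k'|)$ that do not obviously cancel. The paper avoids all of this by invoking Theorem~\ref{thm10} (the $L^2$ Thompson-type estimate), which directly gives $h_N\in\SF_N$ with $h_N\to f$ on compacta and $\int_0^1|h_N(re^{2\pi is})|^2\,ds\le(1+\beta)\int_0^1|\Psi_N(re^{2\pi is})|^2\,ds+O_\beta(\log^2\frac{e}{1-r})$; integrating against $(1-r^2)$ and using Theorem~\ref{pro:limit} then yields the upper bound $\pi^2/3+\varepsilon$ immediately.
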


This result shows, in particular, that the set $\SF$ is a $((\pi/\sqrt3) + \varepsilon)$-net
in the space $A^2_1$, for small $\varepsilon>0$; considering the functions $-\Psi_N$ with large $N$ 
we see that the set $\SF$ is not a 
$((\pi/\sqrt3) - \varepsilon)$-net in the space $A^2_1$, for small $\varepsilon>0$.

In 1967 Thompson \cite{MT} (answering a question posed by Korevaar in 1965) obtained that for every bounded analytic function $f$ in 
$\mathbb D$, there exist $h_n\in \bigcup_{N\ge n}\SF_N$, $n\ge 1$, converging to $f$ uniformly on compact subsets 
of $\mathbb D$ and such that 
$$
\sup_{n\ge 1,\,z\in\mathbb D}(1-|z|)|h_n(z)|<\infty.
$$
His proof used the results and the constructions by Mac Lane in \cite{ML}. 
Let us formulate a somewhat improved version of Thompson's theorem. 

Let $H^\infty=H^\infty(\mathbb D)$ denote the space of bounded 
analytic functions in the unit disc. 

\begin{theorem}\label{thm9} Let $f\in H^\infty$. For every $\varepsilon>0$, for every compact subset $K$ of $\mathbb D$,  and 
for every $N\ge N(f,\varepsilon,K)$ there exists $h\in \SF_N$ such that 
\begin{gather*}
\|f-h\|_{L^\infty(K)}\le\varepsilon,\\
|h(z)|\le \frac1{1-|z|}+C_0\|f\|_{H^\infty}\log\frac e{1-|z|},\qquad z\in\mathbb D,
\end{gather*}
for some absolute constant $C_0$.
\end{theorem}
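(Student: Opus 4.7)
The plan is to construct $h$ as a precisely tuned perturbation of the equispaced configuration, so that $h$ behaves like $\Psi_N+f$ on compact subsets of $\mathbb D$ while remaining controllable near $\mathbb T$. Replacing $f$ by $f(rz)$ for $r<1$ sufficiently close to $1$ preserves $\|f\|_{H^\infty}$ and approximates $f$ uniformly on $K$, so I may assume $f$ extends analytically to a neighborhood of $\overline{\mathbb D}$. Set $F(z)=\int_0^z f$; then $|F(z)|\le \|f\|_{H^\infty}$, and the real-valued boundary function $\psi(\theta)=2\,\Im F(e^{i\theta})$ satisfies $\|\psi\|_\infty \le 2\|f\|_{H^\infty}$, with the analytic extension of its positive-frequency part equal to $-iF(z)$. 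I take $\theta_{N,k}=2\pi k/N+\psi(2\pi k/N)/N$ and $h=h_N=\sum_{k=0}^{N-1}(z-e^{i\theta_{N,k}})^{-1}\in\SF_N$.

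To verify $h_N\to f$ uniformly on $K$ I would write $Q_N(z)=\prod_k(z-e^{i\theta_{N,k}})$, expand $\log Q_N(z)=\sum_k\log(1-z e^{-i\theta_{N,k}})+\mathrm{const}$ in a Taylor series in $z$, and compute the discrete Fourier sums $\sum_k e^{-in\theta_{N,k}}$ using the Taylor expansion $\exp(-in\psi(2\pi k/N)/N)\approx 1-in\psi(2\pi k/N)/N$ together with the Riemann-sum approximation $N^{-1}\sum_k\psi(2\pi k/N)e^{-im\cdot 2\pi k/N}\to\hat\psi(-m)$. This yields $\log(Q_N(z)/(z^N-1))=F(z)+o_K(1)$, so differentiating gives $h_N(z)=\Psi_N(z)+f(z)+o_K(1)$; since $\Psi_N\to 0$ locally uniformly, $h_N\to f$.

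The pointwise bound splits as $|h_N(z)|\le|\Psi_N(z)|+|(h_N-\Psi_N)(z)|$. The estimate $|\Psi_N(z)|\le(1-|z|)^{-1}$ follows from the closed form $\Psi_N(z)=Nz^{N-1}/(z^N-1)$ by checking the elementary inequality $r^{N-1}(N-(N-1)r)\le 1$ on $[0,1]$. For the perturbative term, I use
\[
h_N(z)-\Psi_N(z)=\sum_k\frac{e^{2\pi ik/N}(e^{i\psi(2\pi k/N)/N}-1)}{(z-e^{i\theta_{N,k}})(z-e^{2\pi ik/N})}
\]
and group the poles by angular distance from $z/|z|$. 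The contribution of the pole $b_{k^\ast}$ closest to $z/|z|$, together with the corresponding perturbed pole, is absorbed into the $(1-|z|)^{-1}$ term. For the remaining poles, I perform a dyadic decomposition in angular distance: on each dyadic scale, the sum is controlled by the cancellation inherited from the fact that $\psi$ is the boundary value of the $H^\infty$ function $-2iF$, yielding an $O(\|f\|_{H^\infty})$ bound per scale; summing over the $O(\log(1/(1-|z|)))$ relevant dyadic scales between $1-|z|$ and $\pi$ gives the required logarithmic factor.

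The principal obstacle is this logarithmic bound on the perturbative term: a direct triangle-inequality estimate yields only $C\|f\|_{H^\infty}(1-|z|)^{-1}$, losing the gain afforded by the analyticity of $\psi_+=-iF$. To capture this gain one must either recognize the discrete sum as a Riemann approximation to $i\psi_+'(z)=f(z)$ and carefully bound the discretization error near $\mathbb T$, or argue directly by summation by parts using the regularity of $\psi$, in either case making essential use of the boundedness of the Szeg\H{o} projection on the symbol $\psi$.
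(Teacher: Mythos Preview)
Your construction is essentially the paper's: up to first order, $\theta_{N,k}=2\pi k/N+\psi(2\pi k/N)/N$ agrees with $2\pi x_{N,k}$ where $x_{N,k}=W_N^{-1}(k)$ and $W_N(t)=Nt-2\int_0^t\Re(e^{2\pi iu}f(e^{2\pi iu}))\,du$ (note $\psi'(\theta)=2\Re(e^{i\theta}f(e^{i\theta}))$). The locally uniform convergence $h_N\to f$ can indeed be obtained along the lines you indicate; the paper does it instead by writing $f(z)=\int_0^1 W_N'(t)(z-e^{2\pi it})^{-1}\,dt$ via the Cauchy formula and comparing this integral with the Riemann-type sum $h_N$.

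The real gap is in the pointwise bound. Your splitting $|h_N|\le|\Psi_N|+|h_N-\Psi_N|$ forces you to prove $|h_N(z)-\Psi_N(z)|\le C_0M\log\frac{e}{1-|z|}$, and this is too strong. Near the point of $\mathbb T$ closest to $z$ the perturbation $\psi(2\pi k/N)/N$ is essentially a fixed nonzero shift $\psi(\arg z)/N$ for all nearby poles, so locally $h_N$ looks like a copy of $\Psi_N$ rotated by that angle. For $1-|z|\asymp 1/N$ this rotation moves $\Psi_N$ by an amount of order $N$, not $M\log N$; equivalently, your Riemann-sum discretization error for the kernel $(z-e^{i\theta})^{-2}$ is governed by $\sum_k N^{-2}\,d_k^{-3}\asymp MN$ there. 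The ``cancellation from analyticity'' you invoke controls the \emph{integral} (which is $f(z)$, bounded), but not the discretization error; neither summation by parts on $\psi$ nor boundedness of the Szeg\H{o} projection recovers the missing factor. Moreover, ``absorbing'' the closest-pole term into $(1-|z|)^{-1}$ would change the leading coefficient from $1$ to at least $3$, contradicting the stated inequality.

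The paper sidesteps all of this with one idea: for each $z$ it compares $h_N$ not with $\Psi_N$ but with the rotated copy $e^{-2\pi ix_{N,m}}\Psi_N(ze^{-2\pi ix_{N,m}})$, where $m$ is the index of the pole of $h_N$ closest to $z$. Since the reference configuration $y_{N,k}=x_{N,m}+(k-m)/N$ coincides with $h_N$ at $k=m$, the deviations satisfy $|e^{2\pi ix_{N,k}}-e^{2\pi iy_{N,k}}|=O(N^{-1}M\,|1-e^{2\pi i(k-m)/N}|)$, i.e.\ they vanish at the closest pole and grow linearly. A plain triangle-inequality estimate on $\sum_k |e^{2\pi ix_{N,k}}-e^{2\pi iy_{N,k}}|\cdot|z-e^{2\pi iy_{N,k}}|^{-2}$ then yields the $C_0M\log\frac{e}{1-|z|}$ bound directly, with no cancellation needed, and the rotated $\Psi_N$ still obeys $|\Psi_N(\cdot)|\le(1-|z|)^{-1}$, preserving the coefficient $1$.
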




To prove Theorems~\ref{thm:clos} and \ref{thm8} we use an $L^p$ version of Thompson's theorem which we will formulate below. Whereas the simplest fractions $h$ constructed in the proof of Theorem~\ref{thm9} have ``almost'' equispaced poles, our Theorem~\ref{thm10} shows that the average growth of $h$ along the concentric circles $r\mathbb T$ is not much 
faster than that of the corresponding simplest fraction $\Psi_N$.

Given $\beta>0$, denote
$$
\rho(\beta)=\frac{1+\beta}{((1+\beta)^{1/(p-1)}-1)^{p-1}}>0 
$$
for $1< p<\infty$ and $\rho(\beta)=1$ for $p=1$, 
so that, by a simple calculation, we have
$$
(x+y)^p\le (1+\beta)x^p+\rho(\beta)y^p,\qquad x,y\ge 0.
$$

\begin{theorem}\label{thm10} Let $f\in H^\infty$, $1\le p<\infty$. For every $\varepsilon,\beta>0$, for every compact subset $K$ of $\mathbb D$, and 
for every $N\ge N(f,\varepsilon,K)$ there exists $h\in \SF_N$ such that 
\begin{align}
\|f-h\|_{L^\infty(K)}&\le\varepsilon,\notag\\
\label{drff}\int_0^1|h(e^{2\pi i s}r)|^p\,ds &\le (1+\beta)\int_0^1|\Psi_N(e^{2\pi i s}r)|^p\,ds\\
&\quad+\rho(\beta)C^p_0\|f\|^p_{H^\infty}\log^p\frac e{1-r},\qquad 0<r<1,\notag
\end{align}
for $C_0$ as in Theorem~\ref{thm9}.
\end{theorem}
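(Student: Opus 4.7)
The plan is to combine the construction from Theorem~\ref{thm9} with a refined comparison between the constructed simplest fraction $h$ and the canonical fraction $\Psi_N$ along concentric circles.

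First, I would apply the construction from the proof of Theorem~\ref{thm9} to obtain $h\in\SF_N$ with $\|f-h\|_{L^\infty(K)}\le\varepsilon$. As emphasized in the preamble to Theorem~\ref{thm10}, this construction produces poles $a_0,\ldots,a_{N-1}$ that are almost equispaced on $\mathbb T$, i.e., close to the $N$-th roots of unity $\zeta_k = e^{2\pi ik/N}$, with quantitative control of the displacements $|a_k-\zeta_k|$ in terms of $\|f\|_{H^\infty}$ and $1/N$.

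Second, and this is the technical heart of the argument, I would upgrade the bound of Theorem~\ref{thm9} from $1/(1-|z|)$ to $|\Psi_N(z)|$, establishing
\begin{equation*}
|h(z)| \le |\Psi_N(z)| + C_0\|f\|_{H^\infty}\log\frac{e}{1-|z|}, \qquad z\in\mathbb D.
\end{equation*}
Starting from the identity
\begin{equation*}
h(z)-\Psi_N(z)=\sum_{0\le k<N}\frac{a_k-\zeta_k}{(z-a_k)(z-\zeta_k)},
\end{equation*}
I would exploit the cancellation inherent to an almost-equispaced configuration to bound the right-hand side by a logarithmic term, using the quantitative displacement estimates from the construction. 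If the bound above should fail in some regime near the unit circle, one can instead argue at the $L^p$ level and establish
$\|h-\Psi_N\|_{L^p(r\mathbb T)}\le C\|f\|_{H^\infty}\log\bigl(e/(1-r)\bigr)$,
then apply Minkowski's inequality in $L^p(r\mathbb T)$.

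Third, with the refined majorant in hand, I would apply the algebraic inequality $(x+y)^p\le(1+\beta)x^p+\rho(\beta)y^p$ pointwise, with $x=|\Psi_N(z)|$ and $y=C_0\|f\|_{H^\infty}\log\bigl(e/(1-|z|)\bigr)$. Since $y$ depends only on $|z|=r$, integrating both sides in $s\in[0,1]$ with $z=re^{2\pi is}$ immediately produces \eqref{drff}.

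The main obstacle is the second step: effectively replacing the crude bound $1/(1-|z|)$ by $|\Psi_N(z)|$. The difficulty is that $|\Psi_N|$ is sharply peaked near the $N$-th roots of unity and enjoys strong cancellation in between, and one must verify that the almost-equispaced poles of $h$ inherit enough of that cancellation to preserve the comparison $|h|\le|\Psi_N|+(\text{log})$ (pointwise, or at least averaged on circles). This forces a careful re-reading of the proof of Theorem~\ref{thm9}, tracking precisely how the perturbation of the $N$-th roots of unity, needed to make $h$ approximate $f$ on $K$, affects $h$ both near and away from its poles.
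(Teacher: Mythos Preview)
Your first and third steps match the paper's argument; the issue is entirely in your second step, and it is not merely difficult---it fails as stated.

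The pointwise inequality $|h(z)|\le|\Psi_N(z)|+C_0\|f\|_{H^\infty}\log\frac{e}{1-|z|}$ is false. The poles $a_k=e^{2\pi ix_{N,k}}$ produced by the Theorem~\ref{thm9} construction are displaced from \emph{any} fixed equispaced family by as much as $cM/N$ in angle (the gaps $x_{N,k+1}-x_{N,k}=N^{-1}+O(MN^{-2})$ accumulate). Take $r$ with $1-r\ll M/N$ and $z=re^{2\pi ix_{N,k}}$ for an index $k$ where this displacement is realized. Then $|h(z)|\asymp(1-r)^{-1}$, while the nearest root of unity is at angular distance $\asymp M/N$, so $|\Psi_N(z)|\asymp N/M$; the difference $|h(z)|-|\Psi_N(z)|\asymp(1-r)^{-1}$ is far larger than any logarithmic term. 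Your $L^p$ fallback fails for the same reason: the function $h-\Psi_N$ has $2N$ simple poles on $\mathbb T$ (residues $\pm1$), and for $1-r\ll M/N$ a routine estimate gives $\|h-\Psi_N\|^p_{L^p(r\mathbb T)}\gtrsim N(1-r)^{1-p}$ for $p>1$, not $O(\log^p\frac{e}{1-r})$.

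What the paper uses instead is already implicit in the proof of Theorem~\ref{thm9}: inequality \eqref{New} compares $h$ to a \emph{$z$-dependent} rotation of $\Psi_N$, aligned so that one pole of the rotated $\Psi_N$ coincides with the pole $a_m$ of $h$ nearest to $z$. This gives $|h(re^{2\pi is})|\le v(\text{angular distance from }s\text{ to the nearest }x_{N,k})+C_0M\log\frac{e}{1-r}$, where $v(t)=|\Psi_N(re^{2\pi it})|$ (extended suitably). One cannot simply integrate this, since the comparison function changes with $s$; instead the paper partitions $[0,1]$ at the midpoints between consecutive $x_{N,k}$, writes the $v$-contribution on each half-interval as $w(\text{length})$ for the concave function $w(t)=\int_0^t v^p$, and applies Jensen's inequality to conclude $\sum_j w(|J_j|)\le 2Nw(1/(2N))=\int_0^1|\Psi_N(re^{2\pi is})|^p\,ds$. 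This concavity/rearrangement step is the missing idea in your plan.
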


\begin{remark}\label{r125} Our estimates on $h$ in Theorem~\ref{thm10} improve on those in Theorem~\ref{thm9}. 
Namely, for $1<p<\infty$ and $r\in(1-N^{-1},1)$, Theorem~\ref{thm10} gives 
$$
I_r:=\int_0^1|h(e^{2\pi i s}r)|^p\,ds\lesssim N(1-r)^{1-p},
$$ 
which improves 
on the estimate $I_r\lesssim (1-r)^{-p}$ that we can get from Theorem~\ref{thm9}.
If $1-r=A/N$ for large fixed $A$, then Theorem~\ref{thm10} gives $I_r\lesssim N^pe^{-pA}$ while Theorem~\ref{thm9} gives 
$I_r\lesssim N^pA^{-p}$, $N\to\infty$.
\end{remark}

Our results motivate the following open questions.

\begin{question}
Does Theorem~\ref{thm:lb} hold for larger classes of $g$? For instance, for $g(t)=t^\alpha$, $\alpha>1$\,?
\end{question}

\begin{question} It would be of interest to have more information about the mutual location of the sets 
$\SF_n$ in the spaces $A^2_\alpha$.
In particular, are pairwise distances between these sets bounded away from $0$ in the space $A^2_1$\,? 
Our conjecture is that the answer is positive, and, moreover,
if $\alpha>0$, $n,k\ge 1$, then
$$
\dist_{A^2_\alpha}(\SF_n,\SF_{n+k})=\|\Psi_k\|_{\alpha}\,.
$$
\end{question}
 
We finish this section with a few words about the organization of the paper and the methods used.
 
Theorems~\ref{thm:lb}, \ref{pro:limit}, and \ref{t4} and Proposition~\ref{pro} are proved  in Section~\ref{S4}. The proof of Theorem~\ref{thm:lb} uses some classical results on trigonometric series and a convexity argument, which is discussed  in Section~\ref{au}. The proofs of Theorem~\ref{pro:limit} and Proposition~\ref{pro} are direct calculations. In  the proof of Theorem~\ref{t4} we use moment estimates for systems of unimodular numbers 
going back to J.~W.~S.~Cassels. 

In Section~\ref{S5} we establish Theorems~\ref{thm9} and \ref{thm10} generalizing Thompson's theorem. 
Using the ideas of \cite{ML} and \cite{MT}, we provide a short argument with better pointwise and integral estimates.

Theorems~\ref{thm:clos} and \ref{thm8} are proved in Section~\ref{S6}. Their proofs use Theorem~\ref{thm10}. 
In Remark~\ref{r13} we indicate an alternative way to get the density of $\SF$ in $A^2_\alpha$, $\alpha>1$.

\section{Auxiliary lemmas}
\label{au}

Let $g$ be a function satisfying the conditions of Theorem~\ref{thm:lb}.
For integer $k\ge 0$ we set
$$
c_{(g),k}=\int_0^1 t^kg(1-t)\,dt>0,
$$
and define the function
$$
\varphi_{(g)}(t)=\sum_{k\ge 0}c_{(g),k}\cos((k+1)t), \qquad t\in\mathbb R.
$$
Notice that condition \eqref{eq:g1} is equivalent to the fact that
$\varphi_{(g)}(0)<\infty$.

Next, for every $\alpha>0$, let $g_\alpha(t)=t^\alpha$, $t\ge 0$, $c_{\alpha,k}=c_{(g_\alpha),k}$, and $\varphi_\alpha=\varphi_{g_\alpha}$, so that 
$$
c_{\alpha,k}=\int_0^1 t^k\,(1-t)^{\alpha}\,dt,\qquad k\ge 0
$$
and
$$
\varphi_{\alpha}(t)=\sum_{k\ge 0}c_{\alpha,k}\cos((k+1)t), \qquad t\in\mathbb R.
$$
Notice that for $\alpha>0$ we have 
$$
c_{\alpha,k}\asymp k^{-(\alpha+1)},\qquad k\to\infty.
$$

Both $\varphi_{(g)}$ (for the aforesaid $g$) and
$\varphi_{\alpha}$ (for $\alpha>0$) are $2\pi$-periodic even continuous functions.

We need the following convexity lemma.

\begin{lemma}\label{lem:convexity}\hfill\par
{\rm (1)} For every function $g$ satisfying the conditions of Theorem~\ref{thm:lb}, the function
$\varphi_{(g)}$ is strictly convex on $(0,2\pi)$.

\smallskip
{\rm (2)} The function $\varphi_{\alpha}$, $\alpha>0$, is strictly convex on
$(0,2\pi)$ if and only if $\alpha\in(0,1]$.
\end{lemma}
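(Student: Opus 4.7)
The plan is to derive an integral representation for $\varphi''_{(g)}$ on $(0,2\pi)$ that isolates the sign, then reduce positivity to a classical Fej\'er inequality. Starting from
$$
\varphi_{(g)}(t) = \Re\int_0^1 g(1-s)\frac{e^{it}}{1-se^{it}}\,ds,
$$
I differentiate twice in $t$ under the integral and integrate by parts in $s$. Using $g(0)=0$ together with the fact that the $s$-primitive $\frac{se^{it}}{(1-se^{it})^2}$ of the second-derivative kernel vanishes at $s=0$, both boundary terms cancel and I obtain
$$
\varphi''_{(g)}(t) = -\int_0^1 g'(1-s)\,h(s,t)\,ds,\quad h(s,t)=\Re\frac{se^{it}}{(1-se^{it})^2}=\frac{s[(1+s^2)\cos t-2s]}{(1-2s\cos t+s^2)^2}.
$$

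The next step exploits the concavity of $g$ by writing $g'(u)=g'(1^-)+\mu((u,1))$ with $\mu=-dg'$ a positive measure on $(0,1]$. Fubini then yields
$$
\varphi''_{(g)}(t) = -g'(1^-)\,J_0(t)-\int_{(0,1]} J_{1-v}(t)\,d\mu(v),\quad J_r(t):=\int_r^1 h(s,t)\,ds.
$$
Since $g\not\equiv 0$ forces either $g'(1^-)>0$ or $\mu$ to place positive mass inside $(0,1)$, strict convexity reduces to showing $J_r(t)<0$ for every $r\in[0,1)$ and $t\in(0,2\pi)$ (while $J_1\equiv 0$). For $t\in[\pi/2,3\pi/2]$ the factor $(1+s^2)\cos t-2s$ is $\le 0$ on $[0,1]$, so $h(\cdot,t)\le 0$ and $J_r<0$ directly. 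For $t\in(0,\pi/2)\cup(3\pi/2,2\pi)$, that factor changes sign exactly once on $(0,1)$, so $J_r$ is unimodal in $r$ with $J_1=0$ and maximum $\max(J_0(t),0)$, reducing everything to proving $J_0(t)<0$.

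The main obstacle is establishing $J_0(t)<0$ on $(0,2\pi)$. Computing the $s$-antiderivative,
$$
\int_0^1\frac{sz}{(1-sz)^2}\,ds=\frac{1}{1-z}+\frac{\log(1-z)}{z},
$$
taking the real part at $z=e^{it}$, and expanding $-\log(1-z)/z=\sum_{n\ge 0}z^n/(n+1)$, I obtain the explicit identity
$$
J_0(t)=-\frac{1}{2}-\sum_{n\ge 1}\frac{\cos(nt)}{n+1}.
$$
The sequence $a_n=1/(n+1)$ is positive, tends to zero, and is strictly convex in $n$ (since $\Delta^2 a_n=2/((n+1)(n+2)(n+3))>0$), so the classical Fej\'er theorem on cosine series with convex null coefficients yields $\frac{a_0}{2}+\sum_{n\ge 1}a_n\cos(n\theta)>0$ on $(0,2\pi)$. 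Equivalently $J_0(t)<0$ strictly, and combining this with the earlier reductions gives $\varphi''_{(g)}(t)>0$ on $(0,2\pi)$, proving part (1).

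For part (2), the direction $\alpha\in(0,1]$ is immediate from (1) applied to $g_\alpha(u)=u^\alpha$, which is concave (affine when $\alpha=1$), non-decreasing, vanishes at $0$, and satisfies \eqref{eq:g1}. For $\alpha>1$, the coefficients $(k+1)c_{\alpha,k}\sim \Gamma(\alpha+1)k^{-\alpha}$ are absolutely summable, so the termwise sine series for $\varphi'_\alpha$ converges uniformly and $\varphi_\alpha\in C^1(\mathbb{R})$; evenness gives $\varphi'_\alpha(0)=0$ and $2\pi$-periodicity gives $\varphi'_\alpha(2\pi)=0$. If $\varphi_\alpha$ were convex on $(0,2\pi)$, continuity would extend this to $[0,2\pi]$, forcing the non-decreasing function $\varphi'_\alpha$ to vanish identically on $[0,2\pi]$; hence $\varphi_\alpha$ would be constant, contradicting $\varphi_\alpha(0)\ge c_{\alpha,0}+c_{\alpha,1}>c_{\alpha,0}\ge|\varphi_\alpha(\pi)|$.
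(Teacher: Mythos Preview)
Your proof is correct and follows essentially the same route as the paper's. Both arguments derive the same integral formula for $\varphi_{(g)}''$, reduce strict positivity to the special case $g(t)=t$ (equivalently, to your inequality $J_0(t)<0$, which is exactly the paper's $\varphi_1''(t)>0$), and establish that case via the Fej\'er-kernel positivity for cosine series with convex null coefficients; the treatment of $\alpha>1$ via $\varphi_\alpha\in C^1$ and the maximum at $t=0$ is also the same. The only cosmetic difference is that the paper performs the reduction by the one-sign-change inequality $\int_0^1 h_t(s)g'(1-s)\,ds\ge g'(1-s_t)\int_0^1 h_t(s)\,ds$ directly, whereas you obtain the equivalent conclusion by writing $g'(u)=g'(1^-)+\mu((u,1))$ and applying Fubini---this is the layer-cake formulation of the same monotonicity argument.
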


\begin{proof} 
(A) First we prove that the function $\varphi=\varphi_{1}$ is strictly convex. We have
$$
c_{1,k}=\frac1{(k+1)(k+2)},\qquad k\ge 0,
$$
and, hence,
$$
\varphi(t)=\sum_{k\ge 1}\frac{\cos(kt)}{k(k+1)}.
$$
Therefore, for every $t\in(0,2\pi)$ we obtain
\begin{multline*}
\varphi'(t)=-\sum_{k\ge 1}\frac{\sin(kt)}{k+1}=
-\sum_{k\ge 1}\frac{\sin(kt)}{k+1}+\sum_{k\ge 1}\frac{\sin(kt)}{k}
-\frac{\pi-t}2\\=
\sum_{k\ge 1}\frac{\sin(kt)}{k(k+1)}-\frac{\pi-t}2,
\end{multline*}
and, hence,
$$
\varphi''(t)=\frac12+\sum_{k\ge 1}\frac{\cos(kt)}{k+1}.
$$

Now we are going to use the following result from the book by N.~Bari \cite[Chapter~1, Section~30]{Bari1964book}.
Let $\{a_k\}_{k\ge 0}$ be a decreasing convex sequence of positive numbers, $\lim_{k\to\infty}a_k=0$. Then 
$(a_0/2)+\sum_{k\ge 1}a_k{\cos(kt)}\ge 0$, $t\in(0,2\pi)$, because
$$
\frac{a_0}2+\sum_{k\ge 1}a_k{\cos(kt)}=\frac12\sum_{j\ge 0}(j+1)\Delta^2a_jF_{j+1}(t),\qquad t\in(0,2\pi),
$$
where $\Delta^2a_j=\Delta a_j-\Delta a_{j+1}$, $\Delta a_j=a_j-a_{j+1}$, $j\ge 0$, and $F_j$ are the Fej\'er kernels,
$$
F_j(t)=\frac1j\Bigl(\frac{\sin(jt/2)}{\sin(t/2)}\Bigr)^2\ge 0,\qquad j\ge 1.
$$
In our situation, $a_k=1/(k+1)$, $\Delta^2a_k>0$, $k\ge 0$, and, hence, we have
$\varphi''(t)> 0$ on $(0,2\pi)$.

(B) Let $g$ be a function satisfying the conditions of Theorem~\ref{thm:lb}. Then we have
\begin{align*}
\varphi_{(g)}(t)&=\sum_{k\ge 0}\cos((k+1)t)\int_0^1s^k\,g(1-s)\,ds\\&=
\Re\, e^{it}\,\int_0^1\sum_{k\ge 0}e^{itk}\,s^k\,g(1-s)\,ds\\&=
\Re{\int_0^1\frac1{e^{-it}-s}\,g(1-s)\,ds}\\&=
\int_0^1\frac{\cos{t}-s}{1+s^2-2s\cos{t}}\,g(1-s)\,ds. 
\end{align*}
Hence, $\varphi_{(g)}\in C^{\infty}((0,2\pi))$. 

Furthermore,
\begin{align*}
\varphi_{(g)}(t)&=
\int_0^1\frac{\cos{t}-s}{1+s^2-2s\cos{t}}\,g(1-s)\,ds\\&=
-\frac12\int_0^1g(1-s)\,d\log(1+s^2-2s\cos{t})\\&=
-\frac12\int_0^1\log(1+s^2-2s\cos{t})\,g'(1-s)\,ds,
\end{align*}
for $0<t<2\pi$. Hence,
$$
\varphi_{(g)}'(t)=-\int_0^1\frac{s\sin{t}}{1+s^2-2s\cos{t}}\,g'(1-s)\,ds,
\qquad 0<t<2\pi,
$$
and
\begin{multline*}
\varphi_{(g)}''(t)=
-\int_0^1\frac{s\cos{t}(1+s^2-2s\cos{t})-2s^2\sin^2{t}}{(1+s^2-2s\cos{t})^2}\,g'(1-s)\,ds\\=
\int_0^1s\frac{2s-(1+s^2)\cos{t}}{(1+s^2-2s\cos{t})^2}\,g'(1-s)\,ds,\qquad
0<t<2\pi.
\end{multline*}
Thus, $\varphi_{(g)}$ is strictly convex on $\big[\frac\pi2,\pi\big]$. Next, let us
observe that $\varphi_{(g)}''(t)=\varphi_{(g)}''(2\pi-t)$ on $(0,2\pi)$. Hence, $\varphi_{(g)}$ is strictly convex on 
$\big[\frac\pi2,\frac{3\pi}2\big]$.

Since the function $\varphi_1$ is strictly convex on $(0,2\pi)$, we obtain that
$$
\int_0^1 h_t(s)\,ds> 0,
$$
for $0<t<\dfrac\pi2$, where
$$
h_t(s)=s\frac{2s-(1+s^2)\cos{t}}{(1+s^2-2s\cos{t})^2}.
$$
Take now $t\in\big(0,\frac\pi2\big)$ and choose (the unique) $s_t\in(0,1)$
such that $2s_t=(1+s_t^2)\cos{t}$. Then $h_t(s)<0$ for $s\in(0,s_t)$ and
$h_t(s)>0$ for $s\in(s_t,1)$. Furthermore,
\begin{multline*}
\varphi_{(g)}''(t)=\int_0^1h_t(s)\,g'(1-s)\,ds\\ \ge
\int_0^{s_t}h_t(s)\,g'(1-s_t)\,ds+
\int_{s_t}^1h_t(s)\,g'(1-s_t)\,ds=\\
g'(1-s_t)\int_0^1h_t(s)\,ds\ge 0.
\end{multline*}

Suppose now that $\varphi_{(g)}''(t)=0$. Then $g'(1-s_t)=0$, and, hence, $g'=0$ on the interval $[1-s_t,1]$, and  
$\varphi_{(g)}''(t)=\int_{s_t}^1h_t(s)\,g'(1-s)\,ds>0$, which is impossible.
Therefore, the function $\varphi_{(g)}$ is strictly convex on $(0,\frac\pi2)$. Using once again that 
$\varphi_{(g)}''(t)=\varphi_{(g)}''(2\pi-t)$, we conclude that $\varphi_{(g)}$ is strictly convex on $(0,2\pi)$.

(C) By the result of (B), the function $\varphi_{\alpha}$ is strictly convex on $(0,2\pi)$ for $\alpha\in(0,1]$.

(D) It remains to notice that for any $\alpha>1$ the function
$\varphi_{\alpha}$ is not convex. Indeed, for such $\alpha$ we have
$\varphi_{\alpha}\in C^1(\mathbb R)$, and, since $\varphi_{\alpha}$ attains its
maximum at the point $t=0$, this function cannot be convex on $(0,2\pi)$. 
The
lemma is proved.
\end{proof}


The next lemma pertains to the convex analysis.

\begin{lemma}\label{lem1}
Let $\varphi$ be a $2\pi$-periodic even continuous function strictly convex on $(0,2\pi)$. Then for every $N\ge 2$ we have
\begin{equation}
\inf_{\vartheta_j\in[0,2\pi),\,0\le j< N}\sum_{0\le  j,k<N,\,j\neq k}
\varphi(\vartheta_j-\vartheta_k)=
\sum_{0\le  j,k<N,\,j\neq k}
\varphi\big(\tfrac{2\pi j}{N}-\tfrac{2\pi k}{N}\big).
\label{d2}
\end{equation}
Furthermore, if $\vartheta_j\in[0,2\pi)$, $0\le j< N$, and 
$$
\sum_{0\le  j,k<N,\,j\neq k}
\varphi(\vartheta_j-\vartheta_k)=
\sum_{0\le  j,k<N,\,j\neq k}
\varphi\big(\tfrac{2\pi j}{N}-\tfrac{2\pi k}{N}\big),
$$
then the points $e^{i\vartheta_j}$ are equispaced on the unit circle.
\end{lemma}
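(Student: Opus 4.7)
The plan is to reparametrize the configuration by the consecutive gaps between points, decompose the double sum into contributions from arcs of each combinatorial length, and then apply Jensen's inequality at each level.

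First I would reduce to distinct points. Strict convexity of $\varphi$ on $(0,2\pi)$ combined with the symmetry $\varphi(x)=\varphi(2\pi-x)$ (a consequence of $\varphi$ being $2\pi$-periodic and even) forces $\varphi$ to be U-shaped on $[0,2\pi]$ with unique minimum at $\pi$ and maximum at $0$ and $2\pi$. Hence if $\vartheta_j=\vartheta_k$ for some $j\ne k$, separating these two points by a small $\varepsilon>0$ replaces the pair contribution $2\varphi(0)$ by $2\varphi(\varepsilon)<2\varphi(0)$, while other contributions change continuously; so no minimizer has coinciding points. Relabel so that $\vartheta_0<\cdots<\vartheta_{N-1}<\vartheta_0+2\pi=:\vartheta_N$ and put $g_i=\vartheta_{i+1}-\vartheta_i>0$, so $\sum_i g_i=2\pi$.

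Next I would introduce, for $1\le m\le N-1$, the cyclic $m$-windows
$$
w_i^{(m)}=g_i+g_{i+1}+\cdots+g_{i+m-1}\quad(\text{indices mod }N),
$$
and verify the identity
$$
\sum_{\substack{0\le j,k<N\\ j\neq k}}\varphi(\vartheta_j-\vartheta_k)=\sum_{m=1}^{N-1}S_m,\qquad S_m:=\sum_{i=0}^{N-1}\varphi(w_i^{(m)}).
$$
Non-wrapping windows $w_i^{(m)}$ (those with $i+m-1\le N-1$) account, via evenness of $\varphi$, for the pairs $(j,k)$ with $|k-j|=m$; the wrapping windows have the form $2\pi-w_{i'}^{(N-m)}$ and, by $\varphi(x)=\varphi(2\pi-x)$, match the complementary non-wrapping $(N-m)$-windows, covering the remaining pairs. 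Since each $g_i$ appears in exactly $m$ of the cyclic $m$-windows, $\sum_i w_i^{(m)}=2\pi m$, and Jensen's inequality (convexity of $\varphi$ on $[0,2\pi]$) gives $S_m\ge N\varphi(2\pi m/N)$. Summing over $m$ produces
$$
\sum_{j\ne k}\varphi(\vartheta_j-\vartheta_k)\ge N\sum_{m=1}^{N-1}\varphi\Bigl(\tfrac{2\pi m}{N}\Bigr)=\sum_{j\ne k}\varphi\Bigl(\tfrac{2\pi j}{N}-\tfrac{2\pi k}{N}\Bigr),
$$
which is \eqref{d2}.

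For the equality case, if equality holds in \eqref{d2} then every inequality $S_m\ge N\varphi(2\pi m/N)$ must be an equality. Applied at $m=1$ this yields $\sum_i\varphi(g_i)=N\varphi(2\pi/N)$ with $g_i\in(0,2\pi)$ and $\sum_i g_i=2\pi$; strict convexity of $\varphi$ on $(0,2\pi)$ then forces $g_i=2\pi/N$ for every $i$, i.e., the points $e^{i\vartheta_j}$ are equispaced. The main obstacle is the bookkeeping in the identity $F=\sum_m S_m$: one must carefully match the ``wrap-around'' cyclic $m$-windows with non-wrapping $(N-m)$-windows via $\varphi(x)=\varphi(2\pi-x)$. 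Once this is in place the inequality is a single Jensen application, and the strict inequality in the non-equispaced case is already forced at the lowest level $m=1$.
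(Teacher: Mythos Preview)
Your argument is correct and follows the same strategy as the paper: rewrite the double sum as $\sum_{m=1}^{N-1}\sum_{i=0}^{N-1}\varphi(\vartheta_{i+m}-\vartheta_i)$ (your $S_m$'s, the paper's sums over the shift $s$) and bound each inner sum below by $N\varphi(2\pi m/N)$ via convexity. The paper carries this out by splitting the $m$-th inner sum into $\gcd(N,m)$ disjoint cycles and applying Jensen on each cycle separately; you instead note directly that $\sum_i w_i^{(m)}=2\pi m$ and apply Jensen once to all $N$ terms, which is a small but genuine streamlining. One caveat: your preliminary reduction to distinct points is both unnecessary and not fully justified as stated --- the $O(\varepsilon)$ changes from the remaining pairs can be of the same order as $2(\varphi(\varepsilon)-\varphi(0))$, so ``other contributions change continuously'' does not by itself force a decrease. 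Simply drop that step: allow $g_i\ge 0$, observe that continuity together with strict convexity on $(0,2\pi)$ gives strict convexity of $\varphi$ on the closed interval $[0,2\pi]$, and then your Jensen inequality and the equality analysis at level $m=1$ go through verbatim. (The paper likewise works on the closed simplex and uses this implicit extension of strict convexity.)
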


\begin{proof} 
Changing, if necessary, the enumeration of $\vartheta_j$, we can assume that the infimum in \eqref{d2} is taken
over $0\le \vartheta_0\le  \vartheta_1\le\ldots\le  \vartheta_{N-1}\le 2\pi$. Furthermore, set $\vartheta_{k+jN}=\vartheta_k+2\pi j$, $0\le k<N$, $j\in\mathbb Z$.
Since
$$
\sum_{0\le  j,k<N,\,j\neq k}
\varphi(\vartheta_j-\vartheta_k)=\sum_{1\le  s<N}\sum_{0\le  j<N}
\varphi(\vartheta_{j+s}-\vartheta_j),
$$
it suffices to verify that for every $1\le  s<N$
$$
\sum_{0\le  j<N}\varphi(\vartheta_{j+s}-\vartheta_j)\ge N\varphi\big(\tfrac{2\pi s}{N}\big),
$$
and that the equality is attained only if the differences $\vartheta_{j+s}-\vartheta_j$ do not depend on $j$.

Fix $1\le s<N$ and set $n=\gcd(N,s)$, $m=N/n$. It remains to prove 
\begin{equation}
\sum_{0\le  p<m}\varphi(\vartheta_{j+(p+1)s}-\vartheta_{j+ps})\ge m\varphi\big(\tfrac{2\pi s}{N}\big),\qquad 0\le  j<n,
\label{d3q}
\end{equation}
and 
\begin{equation}
\left\{\begin{gathered}\text{the equality in \eqref{d3q} is attained only if} \\
\text{\,$\vartheta_{j+(p+1)s}-\vartheta_{j+ps}=\tfrac{2\pi s}{N}$, $
0\le j<n$, $0\le p<m$}.
\end{gathered}\right.
\label{d3q9}
\end{equation}

Denote $\ell=s/n$. We have $1\le \ell<m$. Let
$$
\mathcal X_\ell=\Bigl\{ x=(x_0,\ldots,x_{m-1})\in[0,2\pi]^m:\sum_{0\le q<m}x_q=2\pi \ell \Bigr\}.
$$
Now, to get \eqref{d3q} and \eqref{d3q9} we need only to establish the inequality
\begin{equation}
\sum_{0\le  q<m}\varphi(x_q) \ge m\varphi\big(\tfrac{2\pi \ell}{m}\big),\qquad x\in\mathcal X_\ell,
\label{d3qq}
\end{equation}
and the property
\begin{equation}
\sum_{0\le  q<m}\varphi(x_q) = m\varphi\big(\tfrac{2\pi \ell}{m}\big)\iff 
x=(\tfrac{2\pi \ell}{m},\tfrac{2\pi \ell}{m},\ldots,\tfrac{2\pi \ell}{m}), \quad x\in\mathcal X_\ell.\label{d3qq1}
\end{equation}

By a compactness argument, we can find $y=(y_0,\ldots,y_{m-1})\in\mathcal X_\ell$ such that
$$
\sum_{0\le  q<m}\varphi(y_q) =\min_{x\in\mathcal X_\ell}\sum_{0\le  q<m}\varphi(x_q).
$$
%
If $y_q\not=y_{q+1}$ for some $q$, then we can replace $y_q$ and $y_{q+1}$ by $(y_q+y_{q+1})/2$, and, 
by the strict convexity of $\varphi$, the sum of the values of $\varphi$ will decay, which is impossible.
Hence, $y_q=2\pi \ell/m$, $0\le q<m$, which proves \eqref{d3qq} and \eqref{d3qq1}, and concludes the proof of the lemma.
\end{proof}

\section{Proofs of Theorems~\ref{thm:lb}, \ref{pro:limit}, \ref{t4}, and Proposition~\ref{pro}}
\label{S4}


\begin{proof}[Proof of Theorem~\ref{thm:lb}]
Let $N\ge 1$, $\vartheta_k\in[0,2\pi]$, $0\le k<N$,
$$
f(z)=\sum_{0\le k<N}\frac1{z-e^{i\vartheta_k}},\qquad z\in\mathbb D.
$$
Denote by $\|\cdot\|$ the norm and by $\langle\cdot,\cdot\rangle$ the scalar product in the space $A^2_{(g)}$.
If $\vartheta=\vartheta_2-\vartheta_1$, then
\begin{align*}
&\Re\bigg\langle\frac1{z-e^{i\vartheta_1}},\frac1{z-e^{i\vartheta_2}}\bigg\rangle\\&=
\kappa_g\Re\int_{\mathbb D}\bigg(\sum_{k\ge 0}e^{-i(k+1)\vartheta_1}z^k\bigg)\cdot
\bigg(\overline{\sum_{j\ge 0}e^{-i(j+1)\vartheta_2}z^j}\bigg)
\cdot g(1-|z|^2)\,dm_2(z)\\
&=\kappa_g\Re \int_{\mathbb D}\sum_{k\ge 0}|z|^{2k}e^{i\vartheta(k+1)}
g(1-|z|^2)\,dm_2(z)\\
&=2\kappa_g\Re\int_0^1\sum_{k\ge 0}r^{2k}e^{i\vartheta(k+1)}
g(1-r^2)\,r\,dr\\
&=\kappa_g\Re\displaystyle\sum_{k\ge 0}e^{i\vartheta(k+1)}
\displaystyle\int_0^1t^kg(1-t)\,dt\\&=\kappa_g\sum_{k\ge 0}c_{(g),k}\cos((k+1)\vartheta)
=\kappa_g\varphi_{(g)}(\vartheta).
\end{align*}

Therefore,
\begin{align*}
\|f\|^2&=\bigg\langle\sum_{0\le k<N}\frac1{z-e^{i\vartheta_k}},
\sum_{0\le k<N}\frac1{z-e^{i\vartheta_j}}\bigg\rangle
\\&=\sum_{0\le k<N}\Bigl\|\frac1{z-e^{i\vartheta_k}}\Bigr\|^2+
\kappa_g\sum_{0\le j,k<N,\,j\ne k}\varphi_{(g)}(\vartheta_k-\vartheta_j)\\&=N\Bigl\|\frac1{z-1}\Bigr\|^2+
\kappa_g\sum_{0\le j,k<N,\,j\ne k}\varphi_{(g)}(\vartheta_k-\vartheta_j).
\end{align*}
By Lemma~\ref{lem:convexity}, 
the function $\varphi_{(g)}$ is strictly convex on $(0,2\pi)$, and by Lemma~\ref{lem1},
the quantity $\|f\|$ attains its minimum if and only if the points
$e^{i\vartheta_k}$, $0\le k<N$, are equispaced on the unit circle, 
and, hence, $\|f\|=\|\Psi_N\|$.
\end{proof}

\begin{proof}[Proof of Theorem~\ref{pro:limit}]
Given $N\ge 1$, we have
\begin{align*}
\|\Psi_N\|_{\alpha}^2&=(\alpha+1)\int_{\mathbb D}
\bigg|\frac{Nz^{N-1}}{1-z^N}\bigg|^2(1-|z|^2)^\alpha\,dm_2(z)\\&=\frac{\alpha+1}{\pi}\int_0^1\biggl(\int_0^{2\pi}\frac{dt}{|1-r^N e^{iNt}|^2}\biggr)
(Nr^{N-1})^2(1-r^2)^\alpha r\,dr.
\end{align*}
By a direct computation one verifies that
$$
\int_0^{2\pi}\frac{dt}{|1-x e^{it}|^2}=\frac{2\pi}{1-x^2},\qquad 0\le x <1.
$$
Therefore,
$$
\|\Psi_N\|_{\alpha}^2=2(\alpha+1)\int_0^1
\frac{N^2r^{2N-2}(1-r^2)^\alpha r}{1-r^{2N}}\,dr
$$
Using the substitution $r=e^{-s/(2N)}$, we obtain
$$
N^{\alpha-1}\|\Psi_N\|_{\alpha}^2=(\alpha+1)\int_0^{+\infty}
\frac{\bigl(N(1-e^{-s/N})\bigr)^\alpha}{e^s-1}\,ds.
$$
Since $1-e^{-x}\le  x$ for $x\ge 0$, by the Lebesgue dominated convergence theorem we conclude that
\begin{align*}
\lim_{N\to\infty}N^{\alpha-1}\|\Psi_N\|^2_{\alpha}&=
(\alpha+1)\int_0^{+\infty}
\frac{s^\alpha}{e^s-1}\,ds\\&=(\alpha+1)\sum_{k\ge 1}\int_0^{+\infty}
s^\alpha e^{-ks}\,ds\\&=(\alpha+1)\sum_{k\ge 1}k^{-\alpha-1}\int_0^{+\infty}
s^\alpha e^{-s}\,ds\\&=\Gamma(\alpha+2)\zeta(\alpha+1).
\end{align*}
\end{proof}

\begin{remark}\label{r12}
The same calculation shows that for every $\alpha>0$, the sequence $\{N^{\alpha-1}\|\Psi_N\|_{\alpha}^2\}_{N\ge 1}$ is 
monotonically increasing.
\end{remark}

\begin{proof}[Proof of Proposition~\ref{pro}]
As in the proof of Theorem~\ref{pro:limit}, we have
\begin{align*}
\|\Psi_N\|&_{(g)}^2\asymp\int_0^1
\frac{N^2r^{2N-2}g(1-r^2)r}{1-r^{2N}}\,dr\asymp 
N^2\int_0^1
\frac{r^{N-1}g(1-r)}{1-r^N}\,dr\\&=
N^2\int_0^{1-(1/N)}
\frac{r^{N-1}g(1-r)}{1-r^N}\,dr+
N^2\int_{1-(1/N)}^1
\frac{r^{N-1}g(1-r)}{1-r^N}\,dr\\ &\asymp
N^2\int_0^{1-(1/N)} r^Ng(1-r)\,dr+
N^2\int_{1-(1/N)}^1\frac{g(1-r)}{N\cdot(1-r)}\,dr\\&=
N^2\int_{1/N}^{1} (1-r)^Ng(r)\,dr+
N\int_0^{1/N}\frac{g(r)}{r}\,dr.
\end{align*}
\end{proof}

\begin{proof}[Proof of Theorem~\ref{t4}]
The upper estimate follows from Theorem~\ref{pro:limit}. 

Fix $\alpha>1$, $N\ge 1$, and $a_k\in\mathbb T$, $0\le k<N$. To establish the lower estimate,
it suffices to verify that for some absolute constant $C>0$ we have  
\begin{equation}
I:=\int_{N^{-1}<1-|z|^2<2N^{-1}}
\bigg|\sum_{0\le k<N}\frac1{z-a_{k}}\bigg|^2\,dm_2(z)\ge CN.
\label{frt}
\end{equation}
Since
$$
\sum_{0\le k<N}\frac1{z-a_{k}}=-\sum_{s\ge 0}z^s\sum_{0\le k<N}\overline{a}_{k}^{s+1},
$$
we have
$$
I=\sum_{s\ge 0}\int_{N^{-1}<1-t<2N^{-1}}t^s
\Bigl|\sum_{0\le k<N}a_{k}^{s+1}\Bigr|^2\,dt\gtrsim N^{-1}\sum_{0\le s\le 2N-1} 
\Bigl|\sum_{0\le k<N}a_{k}^{s+1}\Bigr|^2.
$$

Now, to get \eqref{frt}, it remains to check that
\begin{equation}
\sum_{1\le j\le 2N}\Bigl|\sum_{0\le k<N}b_k^j\Bigr|^2\ge \delta N^2,
\label{d8}
\end{equation}
for some absolute constant $\delta>0$ and for every family of unimodular numbers $\{b_k\}_{0\le k<N}$.

Individually, the sums $S_j=\sum_{k=0}^{N-1}b_k^j$ could be of order $O(\sqrt N)$ for $1\le j\le N^B$, $B>1$, (see \cite{Erd} for a probabilistic approach and \cite{And} for a deterministic algebraic approach). However, the sum of the squares of the moduli of
$S_j$ for $j$ between $1$ and $(1+\varepsilon)N$ (not between $1$ and $N$) admits a good lower estimate like in \eqref{d8}. Our argument here is inspired by that of J.~W.~S.~Cassels in \cite{Cas}.

For every $M\ge 1$ we have
\begin{align*}
&\sum_{1\le j\le M}\Bigl(1-\frac{j}{M+1}\Bigr)\Bigl|\sum_{0\le k<N}b_k^j\Bigr|^2\\
&=
\sum_{1\le j\le M}\Bigl(1-\frac{j}{M+1}\Bigr)\Bigl(N+\sum_{0\le k,m<N,\,k\not=m} (b_k\bar b_m)^j\Bigr)\\&=
N\sum_{1\le j\le M}\Bigl(1-\frac{j}{M+1}\Bigr)+
\sum_{0\le k<m<N}\sum_{|j|\le M,\,j\not=0}\Bigl(1-\frac{|j|}{M+1}\Bigr)
(b_k\bar b_m)^j\\&=\frac{NM}2+\sum_{0\le k<m<N}\sum_{|j|\le M}\Bigl(1-\frac{|j|}{M+1}\Bigr)(b_k\bar b_m)^j-\sum_{0\le k<m<N}1\\ &\ge
\frac{NM}2+\sum_{0\le k<m<N}\sum_{|j|\le M}\Bigl(1-\frac{|j|}{M+1}\Bigr)(b_k\bar b_m)^j -\frac{N(N-1)}2\\&\ge \frac{N(M-N+1)}2,
\end{align*}
because the Fej\'er kernel is non-negative,
$$
\sum_{|j|\le M}\Bigl(1-\frac{|j|}{M+1}\Bigr)e^{ijx}=F_{M+1}(x)\ge 0,\qquad x\in\mathbb R.
$$
Choose now $M=2N$. Then
$$
\sum_{1\le j\le 2N}\Bigl|\sum_{0\le k<N}b_k^j\Bigr|^2\ge
\sum_{1\le j\le 2N}\Bigl(1-\frac{j}{2N+1}\Bigr)\Bigl|\sum_{0\le k<N}b_k^j\Bigr|^2\ge \frac{N^2}2.
$$
\end{proof}



\section{Proofs of Theorems~\ref{thm9} and \ref{thm10}}
\label{S5}

\begin{proof}[Proof of Theorem~\ref{thm9}] Denote $M=\|f\|_{H^\infty}$. 
Given $N\ge 1$, set
$$
W_N(t)=Nt-2\int_0^t\Re(e^{2\pi iu}f(e^{2\pi iu}))\,du,\qquad t\ge 0.
$$
We have $W_N(0)=0$, $W_N(1)=N$,
$$
|W'_N(t)-N|\le 2M,\qquad t\ge 0.
$$
For sufficiently large $N$, the function $W_N$ increases, and we set $x_{N,k}=W_N^{-1}(k)$, $0\le k\le N$. 
We have 
\begin{equation}
|x_{N,k+1}-x_{N,k}|=N^{-1}+O(N^{-2}M), \qquad 0\le k<N, \,N\to\infty.
\label{dopp1}
\end{equation}
Put
$$
h_N(z)=\sum_{0\le k<N}\frac1{z-e^{2\pi ix_{N,k}}}.
$$
Then $h_N\in\SF_N$. 

Given $z=re^{2\pi is}\in\mathbb D$, choose $0\le m<N$ such that 
$$
\bigl|e^{2\pi is}-e^{2\pi ix_{N,m}}\bigr|=\min_{0\le k<N}\bigl|e^{2\pi is}-e^{2\pi ix_{N,k}}\bigr|.
$$
Set $y_{N,k}=x_{N,m}+(k-m)/N$, $0\le k< N$. By \eqref{dopp1}, we have 
$$
|e^{2\pi ix_{N,k}}-e^{2\pi iy_{N,k}}|=O(N^{-1}M|1-e^{2\pi i (k-m)/N}|), \quad 0\le k<N, \,N\to\infty, 
$$
and
$$
|z-e^{2\pi ix_{N,k}}|\asymp|z-e^{2\pi iy_{N,k}}|, \qquad 0\le k<N, \,N>N(M).
$$
Furthermore, 
$$
\sum_{0\le k<N}\frac1{z-e^{2\pi iy_{N,k}}}=e^{-2\pi ix_{N,m}}\Psi_N(ze^{-2\pi ix_{N,m}}).
$$
Therefore, 
\begin{align}
\label{New}&\bigl|h_N(z)-e^{-2\pi ix_{N,m}}\Psi_N(ze^{-2\pi ix_{N,m}})\bigr|  \\
&\le \sum_{0\le k<N}\Bigl| \frac1{z-e^{2\pi ix_{N,k}}}-\frac1{z-e^{2\pi iy_{N,k}}}\Bigr| \notag\\
&=\sum_{0\le k<N} \frac{|e^{2\pi ix_{N,k}}-e^{2\pi iy_{N,k}}|}{|z-e^{2\pi ix_{N,k}}|\cdot|z-e^{2\pi iy_{N,k}}|} \notag\\
&=O(N^{-1}M)\sum_{0\le k<N} \frac{|1-e^{2\pi i (k-m)/N}|}{|z-e^{2\pi iy_{N,k}}|^2} \notag\\
&=O(N^{-1}M)\Bigl(\sum_{0\le j<N(1-|z|)}  \frac{j/N}{(1-|z|)^2} +\sum_{N(1-|z|)\le j<N} \frac{j/N}{(j/N)^2} \Bigr) \notag\\ 
&=O(N^{-1}M)\Bigl(\frac{1}{N(1-|z|)^2}\!\!\sum_{0\le j<N(1-|z|)}  j \,+\,N\!\!\!\!\sum_{N(1-|z|)\le j<N} \frac{1}{j} \Bigr) \notag\\ 
&\le 
C_0M\log\frac e{1-|z|}, \notag
\end{align}
for some absolute constant $C_0$, for $N\ge N(M)$.

Since
$$
|\Psi_N(z)|  \le \frac1{1-|z|},
$$
we conclude that
\begin{align*}
|h_N(z)|&\le \bigl|e^{-2\pi ix_{N,m}}\Psi_N(ze^{-2\pi ix_{N,m}})\bigr|\\
&\qquad\qquad\qquad\qquad+\bigl|h_N(z)-e^{-2\pi ix_{N,m}}\Psi_N(ze^{-2\pi ix_{N,m}})\bigr|
\\ &\le \frac1{1-|z|}+C_0M\log\frac e{1-|z|},\qquad z\in\mathbb D,\,N\ge N(M).
\end{align*}

Furthermore, given $z\in \mathbb D$, we have
\begin{multline*}
f(z)=\frac1{2\pi i}\int_{\mathbb T}\frac{f(\zeta)}{\zeta-z}\,d\zeta=\frac1{2\pi i}\int_{\mathbb T}
\frac{\zeta f(\zeta)+\overline{\zeta f(\zeta)}}{\zeta(\zeta-z)}\,d\zeta\\=\frac1{\pi i}\int_{\mathbb T}
\frac{\Re(\zeta f(\zeta))}{\zeta(\zeta-z)}\,d\zeta=2\int_0^1
\frac{\Re(e^{2\pi it} f(e^{2\pi it}))}{e^{2\pi it}-z}\,dt
\end{multline*}
and
$$
0=\int_0^1\frac{1}{e^{2\pi it}-z}\,dt.
$$
Hence,
$$
f(z)=\int_0^1\frac{W'_N(t)}{z-e^{2\pi it}}\,dt,
$$
and
\begin{align*}
|f(z)-&h_N(z)|= \biggl|\sum_{0\le k<N} \biggl(\int_{x_{N,k}}^{x_{N,k+1}} 
\frac{W'_N(t)}{z-e^{2\pi it}}\,dt-\frac1{z-e^{2\pi ix_{N,k}}}\biggr)\biggr|\\
&=\biggl| \sum_{0\le k<N}\int_{x_{N,k}}^{x_{N,k+1}} \frac{e^{2\pi it}-e^{2\pi ix_{N,k}}}{(z-e^{2\pi it})(z-e^{2\pi ix_{N,k}})}W'_N(t)\,dt\biggr|\\
&=2\pi\biggl|
\sum_{0\le k<N} \int_{x_{N,k}}^{x_{N,k+1}} e^{2\pi ix_{N,k}}\frac{t-x_{N,k}+O(N^{-2})}{(z-e^{2\pi ix_{N,k}})^2+O(N^{-1})}W'_N(t)\,dt\biggr|\\
&= \frac{\pi}N\biggl|
\sum_{0\le k<N} \Bigl(\frac{e^{2\pi ix_{N,k}}}{(z-e^{2\pi ix_{N,k}})^2}+O(N^{-1}M)\Bigr)\biggr|
\end{align*}
for $N\ge N(M)$. 
Since $K$ is a compact subset of $\mathbb D$, we have
$$
\Bigl|\sum_{0\le k<N}\frac{e^{2\pi iy_{N,k}}}{(z-e^{2\pi iy_{N,k}})^2}\Bigr|=
\Bigl|\frac{N^2z^{N-1}}{(e^{2\pi iNy_{N,1}}-z^N)^2}\Bigr|\to 0,
$$
uniformly in $z\in K$ as $N\to\infty$, 
and
\begin{gather*}
|f(z)-h_N(z)|\le\frac{\pi}N 
\sum_{0\le k<N} \Bigl|\frac{e^{2\pi ix_{N,k}}}{(z-e^{2\pi ix_{N,k}})^2}-\frac{e^{2\pi iy_{N,k}}}{(z-e^{2\pi iy_{N,k}})^2}\Bigr|+o(1)\\ \lesssim
\frac{C(K)M}{N^2}\sum_{0\le k<N} |1-e^{2\pi i (k-m)/N}|+o(1)=o(1), 
\end{gather*}
uniformly in $z\in K$ as $N\to\infty$, 
\end{proof} 

\begin{proof}[Proof of Theorem~\ref{thm10}] We use the notation from the proof of Theorem~\ref{thm9}. 
Given $f\in H^\infty$, $\varepsilon>0$, and a compact subset $K$ of $\mathbb D$, choose $N\ge N(f,\varepsilon,K)$ and  $h_N\in \SF_N$ constructed in the proof of Theorem~\ref{thm9} 
so that 
$$
\|f-h_N\|_{L^\infty(K)}\le\varepsilon,
$$

It remains to verify the integral estimate \eqref{drff}. Fix $r\in (0,1)$ and set
$$
v_1(e^{2\pi i t})=|\Psi_N(e^{2\pi i t}r)|,\qquad 0\le t<1.
$$
Since
$$
v_1(e^{2\pi i t}) =\frac{Nr^{N-1}}{|e^{2\pi i tN}r^N-1|},\qquad 0\le t<1,
$$
the function $t\mapsto v_1(e^{2\pi i t})$ is even and decreases on $[0,1/(2N)]$.
Furthermore, we set
\begin{align*}
v_2(e^{2\pi i t})&=\begin{cases}
v_1\bigl(e^{2\pi i t}\bigr),\qquad e^{2\pi i t}\in U:=\{e^{2\pi i u}: \  -\frac{\pi}N\le u\le \frac{\pi}N\},\\
v_1\bigl(e^{\pi i/N}\bigr),\qquad e^{2\pi i t}\notin U,
\end{cases}\\
w(t)&=\int_0^t v^p_2(e^{2\pi i u})\,du,\qquad 0\le t\le 1.
\end{align*}
Then the function $t\mapsto v_2(e^{2\pi i t})$ is decreasing on $[0,1]$, and 
the function $w$ is concave on $[0,1]$. 

If 
\begin{equation}
\bigl|e^{2\pi is}-e^{2\pi ix_{N,m}}\bigr|=\min_{0\le k<N}\bigl|e^{2\pi is}-e^{2\pi ix_{N,k}}\bigr|,
\label{cond87}
\end{equation}
then, by \eqref{New}, we have  
$$
|h_N(e^{2\pi i s}r)|\le|\Psi_N(ze^{-2\pi ix_{N,m}})|+ 
C_0M\log\frac e{1-r}.
$$
Using the argument in the proof of \eqref{New} we obtain that, under condition \eqref{cond87}, if 
$$
\bigl|e^{2\pi is}-e^{2\pi ix_{N,m}}\bigr|\ge \bigl|1-e^{\pi i/N}\bigr|,
$$
then 
$$
|h_N(e^{2\pi i s}r)|\le|\Psi_N(e^{\pi i/N})|+ C_0M\log\frac e{1-r}.
$$
Since the function $t\mapsto v_1(e^{2\pi i t})$ is even, we conclude that 
$$
|h_N(e^{2\pi i s}r)|\le v_2(e^{2\pi i |s-x_{N,m}|})+ C_0M\log\frac e{1-r}.
$$

We divide the interval $[0,1]$ into subintervals 
$J_{2k}=[x_{N,k},(x_{N,k}+x_{N,k+1})/2]$, $J_{2k+1}=[(x_{N,k}+x_{N,k+1})/2,x_{N,k+1}]$, $0\le k< N$.

Then 
\begin{gather*}
\int_0^1|h_N(e^{2\pi i s}r)|^p\,ds =\sum_{0\le k<N}\int_{J_{2k}\cup J_{2k+1}}|h_N(e^{2\pi i s}r)|^p\,ds\\ \le
(1+\beta)\biggl(
\int_{J_{2N-1}\cup J_0}v^p_2(e^{2\pi i s})\,ds+\sum_{0< k<N}\int_{J_{2k-1}\cup J_{2k}}v^p_2(e^{2\pi i |s-x_{N,k}|})\,ds\biggr)\\+
\rho(\beta)C^p_0\|f\|^p_{H^\infty}\log^p\frac e{1-r}\\=
(1+\beta)\sum_{0\le k<N}\bigl( w(|J_{2k}|)+w(|J_{2k+1}|)\bigr)+
\rho(\beta)C^p_0\|f\|^p_{H^\infty}\log^p\frac e{1-r},
\end{gather*}
where $|J|$ is the length of $J$. 
Since the function $w$ is concave and $\sum_{0\le k<N} (|J_{2k}|+|J_{2k+1}|)=1$, we conclude that
\begin{gather*}
\int_0^1|h_N(e^{2\pi i s}r)|^p\,ds \le 
(1+\beta)\cdot 2Nw\Bigl(\frac1{2N}\Bigr)+
\rho(\beta)C^p_0\|f\|^p_{H^\infty}\log^p\frac e{1-r}\\=(1+\beta)\int_0^1|\Psi_N(e^{2\pi i s}r)|^p\,ds
+\rho(\beta)C^p_0\|f\|^p_{H^\infty}\log^p\frac e{1-r}.
\end{gather*}
\end{proof} 

\section{Proofs of Theorems~\ref{thm:clos} and \ref{thm8}}
\label{S6}

\begin{proof}[Proof of Theorem~\ref{thm:clos}]
Denote by $\mathcal S_{(g)}$ the closure of the set $\SF$ in $A^2_{(g)}$. 
Since $\|f\|_{(g_1)}\le\|f\|_{(g_2)}$ when $g_1\le g_2$, we need only to consider the cases $g(t)=t$ and 
$g(t)=o(t)$, $t\to 0$. 

(A) Let $g(t)=t$. Then $A^2_{(g)}=A^2_1$. 
We are going to verify that
\begin{equation}\label{eq:st}
\liminf_{N\to\infty}\inf_{g\in\SF_N}\|f-g\|^2_{1}\ge  \frac{\pi^2}3,\qquad f\in A^2_1.
\end{equation}
Since every $\SF_N$ is compact in $A^2_1$, we can then conclude that $\mathcal S_{(g)}=\SF$.

Assume that \eqref{eq:st} does not hold. Then for some $\varepsilon\in(0,\pi^2/12)$ we find $f\in A^2_1$, 
a sequence $\{N_m\}_{m\ge 1}$ such that $\lim_{m\to\infty}N_m=\infty$, and
a sequence $\{f_m\}_{m\ge 1}$, $f_m\in \SF_{N_m}$, $m\ge 1$, such that  
\begin{equation}\label{dd1}
\|f-f_m\|^2_{1}\le \frac{\pi^2}3-4\varepsilon,\qquad m\ge 1.
\end{equation}

Given $\delta\in(0,1)$, put $g_\delta(t)=\min(\delta,t)$. Since
\begin{equation}
|f(z)|^2g_\delta(1-|z|^2)\le|f(z)|^2(1-|z|^2), \qquad z\in\mathbb D,
\label{dopp2}
\end{equation}
and the function $z\mapsto |f(z)|^2(1-|z|^2)$ is integrable on $\mathbb D$, by Lebesgue's 
dominated convergence theorem we have
$$
\lim_{\delta\to 0}\int_{\mathbb D}|f(z)|^2g_\delta(1-|z|^2)\,dm_2(z)=0.
$$
Choose $\delta\in(0,1)$ such that 
$$
\int_{\mathbb D}|f(z)|^2g_\delta(1-|z|^2)\,dm_2(z)\le \frac{\varepsilon^2}{8}.
$$
By \eqref{dd1} and \eqref{dopp2} we have 
$$
\int_{\mathbb D}|f(z)-f_m(z)|^2g_\delta(1-|z|^2)\,dm_2(z)\le \frac{\pi^2}6-2\varepsilon,\qquad m\ge 1,
$$
and, hence, 
\begin{align*}
\int_{\mathbb D}|f_m(z)|^2&g_\delta(1-|z|^2)\,dm_2(z)\\ &\le 
\Bigl(1+\frac{\varepsilon}{4}\Bigr)\int_{\mathbb D}|f(z)-f_m(z)|^2g_\delta(1-|z|^2)\,dm_2(z)\\&\qquad+
\Bigl(1+\frac4{\varepsilon}\Bigr)\int_{\mathbb D}|f(z)|^2g_\delta(1-|z|^2)\,dm_2(z)\\&\le 
\frac{\pi^2}6-\varepsilon,\qquad m\ge 1.
\end{align*}
Since the function $g_\delta$ is concave and non-decreasing, $g_\delta(0)=0$, and $\displaystyle\int_0
g_\delta(t)t^{-1}\,dt<\infty$, we can apply Theorem~\ref{thm:lb} and obtain
$$
\int_{\mathbb D}|\Psi_{N_m}(z)|^2g_\delta(1-|z|^2)\,dm_2(z)\le \frac{\pi^2}6-\varepsilon,\qquad m\ge 1.
$$
Since the functions $\Psi_{N_m}$ tend to $0$ uniformly on compact subsets of $\mathbb D$ as $m\to\infty$,  
we conclude that 
$$
\int_{\mathbb D}|\Psi_{N_m}(z)|^2(1-|z|^2)\,dm_2(z)\le \frac{\pi^2}6-\frac\varepsilon2,\qquad m\ge m(\delta),
$$
which contradicts to Theorem~\ref{pro:limit}.

This contradiction establishes relation \eqref{eq:st} and, hence, the equality $\mathcal S_{(g)}=\SF$ for $g(t)=t$.

(B) Let $g(t)=o(t)$, $t\to 0$, and let $f\in A^2_{(g)}$. 
Replacing $f$ by the function $z\mapsto f((1-\delta)z)$ with small positive $\delta$, we can assume that $f\in H^\infty$. 
By Theorem~\ref{thm10}, there exist $h_N\in\SF_{N}$, 
such that $h_N$ tend to $f$ uniformly on compact subsets 
of $\mathbb D$, $N\to\infty$, and for $r\in(0,1)$ we have
$$
\int_0^1|h_N(e^{2\pi i s}r)|^2\,ds \le 2\int_0^1|\Psi_{N}(e^{2\pi i s}r)|^2\,ds
+2C^2_0\|f\|^2_{H^\infty}\log^2\frac e{1-r}.
$$
By Corollary~\ref{coro}~(B), we conclude that 
$$
\|f-h_N\|^2_{(g)}\to 0,\qquad N\to\infty.
$$
Thus, $\mathcal S_{(g)}=A^2_{(g)}$.
\end{proof} 

\begin{remark}\label{r13} Given $\alpha>1$, denote by $\mathcal S_{\alpha}$ the closure of the set $\SF$ in $A^2_{\alpha}$.  
An alternative way to prove that $\mathcal S_{\alpha}=A^2_{\alpha}$ for $\alpha>1$, 
follows the scheme proposed by Korevaar in \cite{Kor1964ann} (with a reference to A.~Beurling). Namely, in the case $\alpha>2$, we use that $\lim_{N\to\infty}\|\Psi_N\|_\alpha=0$ to show that the functions $z\mapsto-(z-w)^{-1}$, $w\in\mathbb T$, 
belong to $\mathcal S_{\alpha}$. As a consequence, 
the real linear space $\mathcal R$ spanned by the family of
functions $\Bigl\{\dfrac{iw}{(z-w)^2}: w\in\mathbb T\Bigr\}$
is contained in the set $\mathcal S_{\alpha}$. In the case $1<\alpha\le 2$, using an explicit but more complicated argument 
we can establish that the real linear space
$\widetilde{\mathcal R}$ spanned by the family $\Bigl\{\dfrac1{z-w}:
w\in\mathbb T\Bigr\}$ is contained in the set $\mathcal S_{\alpha}$. 
Then an argument based on the Hahn--Banach
theorem permits us to show that, for $\alpha>2$, $\mathcal R$ is dense in 
$A^2_{\alpha}$ (for $\alpha>1$, $\widetilde{\mathcal R}$ is dense in 
$A^2_{\alpha}$), and to conclude.
\end{remark}

\begin{proof}[Proof of Theorem~\ref{thm8}] Because of \eqref{eq:st}, we need only to verify that for every 
$f\in A^2_1$, $\varepsilon>0$, and for every $N\ge N(f,\epsilon)$, there exists $h\in\SF_N$ such that  
$$
\|f-h\|^2_{1}\le  \frac{\pi^2}3+\varepsilon.
$$
Replacing $f$ by $z\mapsto f((1-\delta)z)$ with small positive $\delta$, we can assume that $f\in H^\infty$. 
Given $0<\beta<1$, choose $\eta\in (0,1)$ such that
$$
\int_{\mathbb D\setminus (1-\eta)\mathbb D}|f(z)|^2\,dm_2(z)+
\int_{1-\eta}^{1}(1-r^2)\log^2\frac e{1-r}\,dr<\beta^2.
$$ 
By Theorem~\ref{thm10}, for every $N\ge N(f,\epsilon)$, there exists $h\in\SF_N$ such that 
$$
\|f-h\|^2_{L^\infty((1-\eta)\mathbb D)}<\beta
$$
and 
\begin{multline*}
\int_0^1|h(e^{2\pi i s}r)|^2\,ds \le (1+\beta)\int_0^1|\Psi_N(e^{2\pi i s}r)|^2\,ds
\\+\frac{1+\beta}{\beta}C^2_0\|f\|^2_{H^\infty}\log^2\frac e{1-r},\qquad 0<r<1.
\end{multline*}
Then
\begin{align*}
\|f&-h\|^2_{1}\\&=2\int_{(1-\eta)\mathbb D}|f(z)-h(z)|^2(1-|z|^2)\,dm_2(z)
\\ &\qquad\qquad\qquad+
2\int_{\mathbb D\setminus (1-\eta)\mathbb D}|f(z)-h(z)|^2(1-|z|^2)\,dm_2(z)\\ &\le 
\beta+2(1+\beta^{-1})\int_{\mathbb D\setminus (1-\eta)\mathbb D}|f(z)|^2(1-|z|^2)\,dm_2(z)\\
&\qquad\qquad\qquad+2(1+\beta)\int_{\mathbb D\setminus (1-\eta)\mathbb D}|h(z)|^2(1-|z|^2)\,dm_2(z)\\ &\le
\beta+2\beta(1+\beta)+
2(1+\beta)^2\int_{\mathbb D\setminus (1-\eta)\mathbb D}|\Psi_N(z)|^2(1-|z|^2)\,dm_2(z)
\\
&\qquad\quad+
\frac{2(1+\beta)^2}{\beta}\int_{\mathbb D\setminus (1-\eta)\mathbb D}C^2_0\|f\|^2_{H^\infty}(1-|z|^2)\log^2\frac e{1-|z|}\,dm_2(z)
\\ &\le
(5+C_1\cdot C^2_0\|f\|^2_{H^\infty})\beta+2(1+\beta)^2\frac{\pi^2}{6}\le \frac{\pi^2}3+\varepsilon,
\end{align*}
for sufficiently small $\beta$.
\end{proof}

\end{document}